\newtheorem{proposition}{Proposition}
\newtheorem{lemma}{Lemma}
\newtheorem{definition}{Definition}
\newtheorem{corollary}{Corollary}
\newtheorem{theorem}{Theorem}
\newtheorem{remark}{Remark}
\def\real{{\mathord{{\rm I\kern-2.8pt R}}}}        % Fake blackboard bold R.
\def\inte{{\mathord{{\rm I\kern-2.8pt N}}}}
\def\sZZ{{\rm Z\kern-2.8ptem{}Z}}
\def\z{{\mathchoice
		{\sZZ}
		{\sZZ}
		{\rm Z\kern-0.30em{}Z}
		{\rm Z\kern-0.25em{}Z} }}
\def\sQQ{{\kern 0.27em \vrule height1.45ex width0.03em depth0em
		\kern-0.30em \rm Q}}
\def\qu{{\mathchoice
		{\sQQ}
		{\sQQ}
		{\kern 0.225em \vrule height1.05ex width0.025em depth0em \kern-0.25em \rm Q}
		{\kern 0.180em \vrule height0.78ex width0.020em depth0em \kern-0.20em \rm Q}
}}
\def\sCC{{\kern 0.27em \vrule height1.45ex width0.03em depth0em
		\kern-0.30em \rm C}}
\def\complex{{\mathchoice
		{\sCC}
		{\sCC}
		{\kern 0.225em \vrule height1.05ex width0.025em depth0em \kern-0.25em \rm C}
		{\kern 0.180em \vrule height0.78ex width0.020em depth0em \kern-0.20em \rm C}
}}
\newcommand{\ba}{\begin{array}}
	\newcommand{\ea}{\end{array}}
\newcommand{\be}{\begin{equation}}
	\newcommand{\ee}{\end{equation}}
\newcommand{\bea}{\begin{eqnarray}}
	\newcommand{\eea}{\end{eqnarray}}
\newcommand{\beaa}{\begin{eqnarray*}}
	\newcommand{\eeaa}{\end{eqnarray*}}
\def\z{\zeta}
\font\tenmath=msbm10 \font\sevenmath=msbm7 \font\fivemath=msbm5
\def \={{\buildrel {\rm (law)} \over =}}
\newcommand\HHH{\mathfrak{H}}
\def\qed{ \hfill \vrule width.25cm height.25cm depth0cm\smallskip}
\newcommand{\basa}{\begin{assumption}}
	\newcommand{\easa}{\end{assumption}}
\newcommand{\bas}{\begin{assum}}
	\newcommand{\eas}{\end{assum}}
\def\span{\hbox{\rm span$\,$}}
\newcommand{\ignore}[1]{}
\begin{document}
	
	\renewcommand{\thefootnote}{\fnsymbol{footnote}}
	
	\renewcommand{\thefootnote}{\fnsymbol{footnote}}

	\title{Malliavin smoothness of the Rosenblatt process}
	
	\author{
  Laurent Loosveldt\thanks{Université de Liège, Département de Mathématiques, Liège, Belgium}%
  \and
  Yassine Nachit\thanks{Université de Lille, Laboratoire Paul Painlevé, Lille, France}%
  \and
    Ivan Nourdin\thanks{University of Luxembourg, Department of Mathematics, Esch-sur-Alzette, Luxembourg}%
  \and
  Ciprian Tudor\thanks{Université de Lille, Laboratoire Paul Painlevé, Lille, France}%
}
	
	\maketitle

	\begin{abstract}
		We investigate the smoothness of the densities of the finite-dimensional distributions of the Rosenblatt process. Within the Malliavin calculus framework, we prove that Rosenblatt random vectors are nondegenerate in the Malliavin sense. As a consequence, their densities belong to the Schwartz space of rapidly decreasing smooth functions. The proof relies on establishing the existence of all negative moments of the determinant of the Malliavin matrix, exploiting the specific structure of random variables in the second Wiener chaos. In addition, we derive exponential-type upper bounds for the partial derivatives of the densities of the finite-dimensional distributions of the Rosenblatt process.
	\end{abstract}

	\vskip0.3cm

{\bf Keywords: } Rosenblatt process; Malliavin calculus; nondegeneracy;  Bouleau--Hirsch criterion;  density of finite-dimensional distributions.

{\bf 2020 AMS Classification Numbers: } 60G22; 60G18; 60H07; 60F05.

\section{Introduction}
Let $F=\left( F_{1}, \ldots , F_{m}\right)$ be a random vector whose components are differentiable in the Malliavin sense. Denote by $ \Gamma_{F}$ its Malliavin matrix, whose entries are given by
	$\Gamma_{F}(i,j)= \langle DF_{i}, DF_{j}\rangle _{\HHH}$, $1\leq i, j\leq m$,
where $D$ denotes the Malliavin derivative with respect to an isonormal process $(W(h), h\in \HHH)$ on a real and separable Hilbert space $\HHH$. A classical result in Malliavin calculus (the {\it Bouleau-Hirsch criterion}) states that if $\Gamma_{F}$ is almost surely  invertible, then  $F$ admits a density with respect to the Lebesgue measure on $\mathbb{R} ^{m}$. 

The random vector $F$ is said to be nondegenerate if its components are infinitely Malliavin differentiable and if $\left( \det \Gamma_{F}\right) ^{-1}\in L ^{p}(\Omega)$ for every $p\geq 1$. In this case, its density is smooth and belongs to the Schwartz space of rapidly decreasing $C ^{\infty}$ functions on $ \mathbb{R} ^{m}$. Hence, obtaining a smooth density is tightly linked to the existence of negative moments of the Malliavin derivatives of the components. 

When the components of $F$ lie in a fixed Wiener chaos, the above result can be strengthened. It was shown in \cite{NNP} that in this setting the almost sure invertibility of $\Gamma_F$ is actually equivalent to the absolute continuity of the law of $F$.  Moreover, \cite{NT} proves that a pair of multiple stochastic integrals fails to admit a density if and only if its components are proportional, mirroring the Gaussian case.

In the recent work \cite{LNNT} we applied the Bouleau-Hirsch criterion to establish the absolute continuity of finite-dimensional distributions of Hermite processes. Let $q\geq 1$ be an integer and $ (Z ^{H, q}_{t}, t\in \mathbb{R})$ a Hermite process of order $q$ with self-similarity index $H\in \left( \frac{1}{2}, 1\right)$. Recall that $Z^{H,q}$ is an $H$-self-similar process with stationary increments and long memory, living in the $q$th Wiener chaos. The class of Hermite processes includes fractional Brownian motion ($q=1$), which is Gaussian, and the Rosenblatt process ($q=2$), which belongs to the second Wiener chaos. For any $m\geq 1$ and any grid $0<t_{1},...<t _{m}$, \cite{LNNT} shows that the determinant of the Malliavin matrix of the Hermite vector $Z= \left( Z^{H, q}_{t_{1}}, \ldots , Z^{H, q}_{t_{m}}\right)$ is almost surely strictly positive; therefore, $Z$ admits a density. 

The objective of the present work is to go a step further and investigate the smoothness of this density. As previously mentioned, smoothness is deeply linked to nondegeneracy,  and in particular to  the existence of negative moments of the determinant of the Malliavin matrix. While establishing  these negative moments appears challenging for general $q$, we succeed in proving  nondegeneracy for $q=2$, that is for Rosenblatt vectors. This relies crucially on the specific  structure of random variables in the second Wiener chaos, which can be represented as weighted sums of independent centered chi-square random variables.

Our first main result concerns the smoothness of the density of increments of the Rosenblatt process. 

\begin{theorem}\label{tt3}
	Let $(Z_{t}, t\in \mathbb{R})$ be a Rosenblatt process with self-similarity index $H\in \left( \frac{1}{2}, 1\right)$. Let $m\geq 1$ and $0=t_{0}<t_{1}<...<t_{m}$. Consider the random vector 
	\begin{equation}\label{zpi}
		Z_{\pi} =\left( Z_{t_{1}}-Z_{t_{0}}, \ldots , Z_{t_{m}}-Z_{t_{m-1}}\right),
	\end{equation}
	Then $Z_{\pi }$ admits a density belonging to the Schwartz space  $ \mathcal{S}(\mathbb{R} ^{m})$ of rapidly decreasing smooth functions. 
\end{theorem}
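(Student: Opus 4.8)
The strategy is to verify the two ingredients of nondegeneracy for the vector $Z_\pi$: (i) each component belongs to $\bigcap_{p\geq 1}\mathbb{D}^{\infty,p}$, and (ii) $(\det\Gamma_{Z_\pi})^{-1}\in L^p(\Omega)$ for every $p\geq 1$. Part (i) is automatic and essentially free: each increment $Z_{t_k}-Z_{t_{k-1}}$ is a fixed element of the second Wiener chaos, hence smooth in the Malliavin sense with finite moments of all orders, and all its Malliavin derivatives lie in the appropriate $L^p$ spaces. The entire substance of the theorem therefore lies in part (ii), the existence of all negative moments of $\det\Gamma_{Z_\pi}$. Once (i) and (ii) are in hand, the conclusion that the density lies in $\mathcal{S}(\mathbb{R}^m)$ follows from the standard Malliavin-calculus machinery (the integration-by-parts formula iterated, combined with H\"older's inequality and Sobolev embedding, as in Nualart's book).

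For step (ii), I would first recall from \cite{LNNT} that $\det\Gamma_{Z_\pi}>0$ almost surely, so the issue is purely quantitative: controlling how fast $\det\Gamma_{Z_\pi}$ can approach $0$. The natural route, given the hint in the introduction, is to exploit the structure of the second Wiener chaos. Writing the increment vector via its kernel representation, the Malliavin derivative $D(Z_{t_k}-Z_{t_{k-1}})$ is an element of $\HHH$ depending linearly on $W$, so each entry $\Gamma_{Z_\pi}(i,j)=\langle D(Z_{t_i}-Z_{t_{i-1}}), D(Z_{t_j}-Z_{t_{j-1}})\rangle_\HHH$ is itself a (generally non-centered) element of the second chaos plus a constant. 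I would diagonalize the relevant quadratic forms simultaneously to represent the needed quantities as weighted sums of independent centered chi-square random variables, as announced. The key estimate to establish is a small-ball bound of the form $\mathbb{P}(\det\Gamma_{Z_\pi}\leq \delta)\lesssim \delta^{c}$ for some $c>0$ (or at least a bound summable enough to give all negative moments), using the fact that a nontrivial weighted chi-square sum has a density bounded near any point, together with a lower bound showing that $\det\Gamma_{Z_\pi}$ cannot be small unless some such chi-square combination is small. Equivalently, one shows $\mathbb{E}[(\det\Gamma_{Z_\pi})^{-p}]<\infty$ directly by the layer-cake formula once the small-ball estimate is available.

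The main obstacle I anticipate is precisely obtaining a quantitative small-ball estimate for $\det\Gamma_{Z_\pi}$ that is uniform enough to yield \emph{all} negative moments, rather than just almost-sure positivity. The determinant is a degree-$2m$ polynomial in a Gaussian field, and while second-chaos random variables have very tractable laws, the determinant mixes many of them in a way that is not itself a low-order chaos element. The crux will be to reduce the degeneracy of $\Gamma_{Z_\pi}$ to the degeneracy of a single scalar second-chaos (chi-square-type) random variable whose law is explicitly controlled near the origin — plausibly by an inductive argument on $m$, peeling off one increment at a time and using the independence and stationarity of the increments of the Rosenblatt process together with self-similarity to keep the constants under control. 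A secondary technical point is ensuring the kernel computations (the inner products $\langle \mathbf{1}_{[t_{i-1},t_i]}, \cdot\rangle$ transported through the Rosenblatt kernel) genuinely produce a \emph{nondegenerate} weighted chi-square, i.e.\ that no exact linear dependence sneaks in for special grids — but this is guaranteed by the almost-sure positivity already proved in \cite{LNNT}, so it only needs to be made quantitative.
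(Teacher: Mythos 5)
Your overall architecture matches the paper's: smoothness of the components is free because they live in a fixed chaos, the substance is $(\det\Gamma_{Z_\pi})^{-1}\in\bigcap_p L^p(\Omega)$, and the Schwartz-space conclusion then follows from Nualart's criterion. However, the plan for the central estimate has a genuine gap. You propose to deduce the small-ball bound from ``the fact that a nontrivial weighted chi-square sum has a density bounded near any point.'' A bounded density near the origin only gives $P(X\le\delta)\lesssim\delta$, hence negative moments of order strictly less than $1$ --- nowhere near all negative moments. Likewise, your remark that nondegeneracy of the weighted chi-square ``is guaranteed by the almost-sure positivity already proved in \cite{LNNT}, so it only needs to be made quantitative'' is too optimistic: almost-sure positivity is guaranteed by a \emph{single} nonzero weight, and a single $\lambda\,\zeta^2$ term already has $\mathbf{E}[(\lambda\zeta^2)^{-p}]=\infty$ for $p\ge 1/2$. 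To get all negative moments one genuinely needs \emph{infinitely many} nonzero weights; the paper obtains this from the infinite rank of the covariance operator of $s\mapsto D_sZ_1$ (via Zintout's result), so that $\|DZ_1\|^2$ dominates $4\lambda_{j_N}\chi^2(N)$ for every $N$, and $N>2p$ gives the $p$-th negative moment. This infinite-rank input is the missing idea in your proposal. A further factual slip: the increments of the Rosenblatt process are \emph{not} independent (it is a long-memory process), so the inductive scheme you sketch cannot lean on independence of increments.

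You also miss a reduction that makes the $m$-dimensional problem tractable without any simultaneous diagonalization or induction. The paper factorizes the Gram determinant as $\|DZ_{t_1}\|_{\HHH}^2\prod_{j\ge2}\|D(Z_{t_j}-Z_{t_{j-1}})-\mathrm{proj}_{E_{j-1}}D(Z_{t_j}-Z_{t_{j-1}})\|_{\HHH}^2$ and invokes an identity already proved in \cite{LNNT}: each projected factor equals in law $(t_j-t_{j-1})^{2H}F_j$ with $F_j\ge\|DZ_1\|_{L^2([0,1])}^2$. Combined with H\"older, this collapses everything to negative moments of the single scalar $\|DZ_1\|_{L^2([0,1])}^2$, which is exactly the weighted chi-square sum discussed above. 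So the correct route is: factorize, reduce in law to $\|DZ_1\|^2$, prove that this one random variable has all negative moments via the infinite-rank Karhunen--Lo\`eve expansion. Your determinant-level small-ball strategy, as stated, would not deliver the theorem.
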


As a consequence, 
for all  $m\geq 1$ and for all $0\leq s_{1}<...< s_{m}$, the vector $(Z_{s_{1}},\ldots ,Z_{s_{m}})$ also has a density in $ \mathcal{S}(\mathbb{R} ^{m})$, see Corollary \ref{cor1}. The proof of Theorem \ref{tt3} relies on the classical Malliavin calculus theorem asserting that nondegenerate random vectors possess densities in the Schwartz space. 

Our second main result provides exponential-type bounds for the partial derivatives of the density  of the  finite-dimensional distributions of the Rosenblatt process. For $m \in \mathbb{N}^{\ast}$  and  $n=(n_{1},..., n_{m})\in \mathbb{N} ^{m}$, set 
\begin{equation*}
	\partial _{x} ^{n} =\prod_{j=1}^{m} \left( \frac{\partial}{\partial x_{j}}\right) ^{n_{j}}.
\end{equation*}

\begin{theorem}\label{tt2}
	Let $(Z_{t}, t\in \mathbb{R})$ be a Rosenblatt process with self-similarity index $H\in \left( \frac{1}{2}, 1\right)$.  For any  $m\geq 1$ and $n=(n_{1},..., n_{m})\in \mathbb{N}^{m}$, there exists $C>0$ (depending possibly on $m$ and $n$) such that for any grid $0=t_{0}<t_{1}<...<t_{m}$, the density $p_{\pi} $ of the vector $Z_{\pi}$  defined in (\ref{zpi})  satisfies
	\begin{equation*}
		\left| \partial _{x} ^{n} p_{\pi }(x)\right| \leq C \prod_{j=1}^{m} (t_{j}- t_{j-1}) ^{-H(1+ n_{j})} e ^{-c\frac{x_{j}}{(t_{j}- t_{j-1}) ^{H}}},
	\end{equation*}
	with $c=c(m,q)>0$, for every $x=(x_{1},..., x_{m}) $ such that $ x_{j}\geq 2(t_{j}-t_{j-1})^{H}$, $j=1,..., m.$ 
\end{theorem}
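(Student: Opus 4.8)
The plan is to combine the nondegeneracy established for the proof of Theorem \ref{tt3} with the standard Malliavin-calculus integration-by-parts representation of the derivatives of the density, and then to exploit the self-similarity and the concrete chaos structure to extract the explicit dependence on the mesh widths $\Delta_j := t_j - t_{j-1}$ and the exponential decay in $x_j$. First I would recall the classical density formula: for a nondegenerate vector $F=(F_1,\dots,F_m)$ one has, for any multi-index $n$,
\begin{equation*}
	\partial_x^n p_F(x) = \mathbb{E}\left[ \mathbbm{1}_{\{F_1>x_1,\dots,F_m>x_m\}} \, H_{(1,\dots,1,n)}(F,1)\right],
\end{equation*}
where $H_{(1,\dots,1,n)}(F,1)$ is the iterated Malliavin weight obtained by repeatedly applying the integration-by-parts operator built from $DF$, the inverse Malliavin matrix $\Gamma_F^{-1}$, and the divergence $\delta$. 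The point of using the \emph{one-sided} indicator $\mathbbm{1}_{\{F>x\}}$ rather than $\mathbbm{1}_{\{F\le x\}}$ is that it is what produces the decay as $x\to+\infty$; applying Cauchy--Schwarz gives
\begin{equation*}
	\left|\partial_x^n p_F(x)\right| \le \mathbb{P}(F_1>x_1,\dots,F_m>x_m)^{1/2}\,\left\|H_{(1,\dots,1,n)}(F,1)\right\|_{L^2(\Omega)}.
\end{equation*}

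Next I would take $F=Z_\pi$. By $H$-self-similarity and stationarity of increments of the Rosenblatt process, $Z_{t_j}-Z_{t_{j-1}} \overset{d}{=} \Delta_j^{H} Z_1$ componentwise, and more precisely the whole vector $Z_\pi$ rescales: writing $\widetilde Z$ for the vector with unit-length increments on the rescaled grid, one gets $Z_\pi \overset{d}{=} (\Delta_1^H \widetilde Z_1,\dots,\Delta_m^H \widetilde Z_m)$ in a way that is compatible with the Malliavin structure (the Malliavin derivative and the matrix $\Gamma$ scale by the corresponding powers of $\Delta_j$). Tracking these scalings through the weight $H_{(1,\dots,1,n)}$ — each Malliavin derivative in direction $j$ contributes $\Delta_j^{H}$, each factor of $\Gamma^{-1}$ contributes $\Delta^{-2H}$, and the $n_j$ extra derivatives in coordinate $j$ contribute $\Delta_j^{-H n_j}$ after accounting for the $\partial/\partial x_j = \Delta_j^{-H}\partial/\partial \widetilde x_j$ change of variables in the density itself — should reproduce exactly the prefactor $\prod_j \Delta_j^{-H(1+n_j)}$, with the remaining $L^2$ norm of the rescaled weight being a finite constant depending only on $m$ and $n$ (here is where nondegeneracy, i.e.\ all negative moments of $\det\Gamma$, is used).

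Finally, for the exponential factor I would bound $\mathbb{P}(Z_{t_1}-Z_{t_0}>x_1,\dots)\le \prod_j \mathbb{P}(Z_{t_j}-Z_{t_{j-1}}>x_j) = \prod_j \mathbb{P}(\Delta_j^H Z_1 > x_j)$ — using that the increments, being successive increments over disjoint intervals, can be controlled by a product-type tail bound, or more simply bounding the joint probability by a single one-dimensional marginal and iterating. It then remains to establish a one-sided sub-exponential tail bound $\mathbb{P}(Z_1 > u) \le C e^{-cu}$ for $u\ge 2$: since $Z_1$ lives in the second Wiener chaos it is a (shifted) weighted sum $\sum_k \lambda_k(\xi_k^2-1)$ of independent centered chi-squares with summable and nonnegative-leading coefficients, so its Laplace transform $\mathbb{E}[e^{\theta Z_1}]$ is finite for $\theta$ in a neighbourhood of $0$ (up to $1/(2\sup_k|\lambda_k|)$), and a Chernoff bound gives precisely the claimed exponential decay; the condition $x_j\ge 2\Delta_j^H$ is the translation of "$u\ge 2$" after rescaling. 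The main obstacle I anticipate is the bookkeeping in the second step: verifying cleanly that the self-similarity of the Rosenblatt process lifts to an exact scaling identity at the level of the Malliavin weights $H_{(1,\dots,1,n)}(Z_\pi,1)$, so that the $\Delta_j$-powers combine to give exactly $-H(1+n_j)$ and the leftover constant is genuinely scale-free; the tail estimate and the density formula itself are comparatively routine once nondegeneracy (already in hand from Theorem \ref{tt3}) is invoked.
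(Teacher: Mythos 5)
Your skeleton (integration-by-parts density formula with a one-sided indicator, Cauchy--Schwarz, a tail bound for the joint event, and an $L^2$ bound on the Malliavin weight) is exactly the paper's. But the proposal has a genuine gap at the one point you yourself flag as the main obstacle: the claim that $Z_\pi$ rescales jointly to a grid-free vector $\widetilde Z$ via $Z_\pi \overset{d}{=} (\Delta_1^H\widetilde Z_1,\dots,\Delta_m^H\widetilde Z_m)$ is not available. Self-similarity only rescales the whole time axis by a \emph{single} factor, so the law of the normalized increment vector $((Z_{t_j}-Z_{t_{j-1}})/\Delta_j^{H})_j$ --- and hence its Malliavin matrix --- still depends on the grid through the ratios of the mesh widths. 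Consequently "the $L^2$ norm of the rescaled weight is a constant depending only on $m$ and $n$" is precisely the statement that the negative moments of $\det\Gamma$ (and, since the weight involves iterated divergences, the full Sobolev norms $\|(\det\Gamma_{Z_\pi})^{-1}\|_{k,p}$, not just $L^p$ norms) admit a bound of the form $C\prod_j\Delta_j^{-2H}$ \emph{uniformly over all grids}; no scaling identity delivers this. The paper obtains it by a different mechanism: the Gram factorization $\det\Gamma_{Z_\pi}=\|DZ_{t_1}\|^2\prod_j\|D(Z_{t_j}-Z_{t_{j-1}})-\mathrm{proj}_{\tilde E_{j-1}}(\cdots)\|^2$ (Lemma \ref{ll1}), the identity in law $\|\cdots\|^2\overset{\rm law}{=}\Delta_j^{2H}F_j$ with the \emph{grid-free pointwise lower bound} $F_j\ge\|DZ_1\|^2_{\HHH_1}$ (Lemma \ref{ll2}), and the existence of all negative moments of $\|DZ_1\|_{\HHH_1}$ proved via the Karhunen--Lo\`eve expansion of $D_\cdot Z_1$ and chi-square negative moments (Proposition \ref{prop33}). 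Without some substitute for this chain your argument does not close.

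A second, smaller gap: the increments of the Rosenblatt process over disjoint intervals are \emph{not} independent (long-range dependence), so $P(\bigcap_j\{Z_{t_j}-Z_{t_{j-1}}\ge x_j\})\le\prod_j P(Z_{t_j}-Z_{t_{j-1}}\ge x_j)$ is unjustified, and bounding the joint event by a single marginal yields only one exponential factor instead of the product. The fix is H\"older's inequality, $P(\bigcap_j A_j)\le\prod_j P(A_j)^{1/m}$, which is why the constant in the exponent is $c/m$ and the theorem states $c=c(m,q)$. Your Chernoff/Laplace-transform route to the one-dimensional tail $P(Z_1>u)\le e^{-cu}$ is fine and is an acceptable alternative to the paper's use of the general Wiener-chaos tail estimate (Lemma \ref{ll11} with $q=2$).
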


The proof of
Theorem \ref{tt2} is based on a Malliavin integration by parts formula expressing the partial derivatives of $p_{\pi}$ (see relation (\ref{22o-1}) in Proposition \ref{pp2} below)  and on a precise control of the resulting terms using Malliavin calculus and tail estimates for random variables in Wiener chaos.

The paper is organized  as follows. Section 2 provides the necessary preliminaries on Malliavin calculus, Wiener chaos and the Rosenblatt process. Section 3 contains the proof of the smoothness of the density of Rosenblatt vectors. Section 4 is devoted to the analysis of the regularity of the partial derivatives of this density. 

\section{Preliminaries}

In this preliminary section, we introduce the basic notions of Malliavin calculus and Wiener chaos that will be used throughout the paper. We also present the Rosenblatt process and recall its main properties.

\subsection{Wiener chaos, multiple stochastic integrals and Malliavin derivative}

We refer to the classical references \cite{N} and \cite{NP-book} for a comprehensive exposition of Wiener chaos and Malliavin calculus. Let $B=(B_{t}, t\in \mathbb{R})$ be a two-sided Brownian motion defined on a complete probability space $(\Omega,\mathcal{F},P)$. Set $\HHH=L^{2}(\mathbb{R})$, and for each $h\in\HHH$ define
\[
B(h)=\int_{\mathbb{R}} h(s)\,dB_{s},
\]
the Wiener integral of $h$ with respect to $B$. The family $\{B(h):h\in\HHH\}$ forms an isonormal Gaussian process, that is, a centered Gaussian family satisfying 
\[
\mathbf{E}[B(h)B(g)] = \langle h,g\rangle_{\HHH} \qquad \text{for all } h,g\in\HHH.
\]

We now recall the definition of the Wiener chaos generated by this isonormal process. Let $(H_q)_{q\ge0}$ denote the Hermite polynomials, defined recursively by  
\[
H_0(x)=1,\qquad H_1(x)=x,\qquad H_{q+1}(x)=xH_q(x)-qH_{q-1}(x).
\]
For $q\ge1$, the \emph{$q$th Wiener chaos} $\mathcal{H}_q$ is the closed linear span in $L^{2}(\Omega)$ of  
\[
\{ H_q(B(h)):\, h\in\HHH,\ \|h\|_{\HHH}=1 \}.
\]
In particular, $\mathcal{H}_1$ is the Gaussian chaos generated by $B(h)$, while $\mathcal{H}_q$ is non-Gaussian for every $q\ge2$.

\medskip

For $q\ge1$ and $h\in\HHH$ with $\|h\|_{\HHH}=1$, we set
\[
I_q(h^{\otimes q}) := H_q(B(h)).
\]
If $\|h\|\ne1$, we instead define 
\[
I_q(h^{\otimes q}) := \|h\|_{\HHH}^q\, H_q\!\left(B\!\left(\frac{h}{\|h\|_{\HHH}}\right)\right).
\]
By polarization, one extends the definition to elementary tensors $h_1\otimes\cdots\otimes h_q\in\HHH^{\otimes q}$, and then by linearity and density to any symmetric kernel $f\in\HHH^{\odot q}$. This yields a linear isometry
\[
I_q:\HHH^{\odot q}\to L^{2}(\Omega),
\]
satisfying
\[
\mathbf{E}[I_p(f)I_q(g)] =
\begin{cases}
p!\,\langle f,g\rangle_{\HHH^{\otimes p}}, & p=q,\\[1mm]
0, & p\ne q,
\end{cases}
\]
for all $f\in\HHH^{\odot p}$ and $g\in\HHH^{\odot q}$. One then obtains
\[
\mathcal{H}_q = \{ I_q(f): f\in\HHH^{\odot q}\}, \qquad q\ge1,
\]
that is, the $q$th Wiener chaos is precisely the image of $\HHH^{\odot q}$ under $I_q$.

\medskip

We now introduce the Malliavin derivative with respect to $B$. Let $\mathcal{S}$ denote the class of \emph{smooth cylindrical random variables} of the form 
\[
F = f(B(g_1),\dots,B(g_n)),
\]
where $n\ge1$, $g_1,\dots,g_n\in\HHH$, and $f\in C^\infty_P(\mathbb{R}^n)$ (i.e., smooth with all derivatives of polynomial growth). The \emph{Malliavin derivative} of $F$ is the process
\[
D_r F = \sum_{j=1}^n \frac{\partial f}{\partial x_j}(B(g_1),\dots,B(g_n))\, g_j(r),\qquad r\in\mathbb{R}.
\]
Higher-order derivatives $D^kF$ are defined by iteration. For $k,p\ge1$, the space $\mathbb{D}^{k,p}$ is the closure of $\mathcal{S}$ under the Sobolev-type norm
\begin{equation}\label{24o-1}
\|F\|_{k,p}^p
= \mathbf{E}[|F|^p] 
+ \sum_{j=1}^k \mathbf{E}\!\left[\|D^jF\|_{\HHH^{\otimes j}}^p\right],
\end{equation}
and we set $\mathbb{D}^\infty=\bigcap_{k,p\ge1}\mathbb{D}^{k,p}$. For $F,G\in\mathbb{D}^{1,2}$, the inner product of their derivatives is given by
\[
\langle DF, DG\rangle_{\HHH} = \int_{\mathbb{R}} D_rF\, D_rG\,dr.
\]

Let $\mathcal{U}$ be a separable Hilbert space. Denote by $\mathcal{S}_{\mathcal{U}}$ the class of $\mathcal{U}$-valued smooth random variables of the form $u=\sum_{j=1}^n F_j u_j$ with $u_j\in\mathcal{U}$ and $F_j\in\mathcal{S}$. One similarly defines the spaces $\mathbb{D}^{k,p}(\mathcal{U})$ and $\mathbb{D}^\infty(\mathcal{U})$ with associated norm
\[
\|u\|_{k,p,\mathcal{U}}^p
= \mathbb{E}\big[\|u\|_{\mathcal{U}}^p\big]
+ \sum_{j=1}^k \mathbb{E}\!\left[\|D^j u\|_{\HHH^{\otimes j}\otimes\mathcal{U}}^p\right].
\]

The \emph{Malliavin matrix} of $Z=(Z_1,\dots,Z_m)$ with $Z_i\in\mathbb{D}^{1,2}$ is the matrix $\Gamma_Z=(\Gamma_Z(i,j))_{1\le i,j\le m}$ defined by
\[
\Gamma_Z(i,j)=\langle DZ_i, DZ_j\rangle_{\HHH}.
\]

We now recall the notion of nondegeneracy used in Malliavin calculus.

\begin{definition}\label{def1}
Let $F=(F_1,\ldots,F_m)$ be an $m$-dimensional random vector with $F_i\in\mathbb{D}^\infty$ for all $i$. We say that $F$ is \emph{nondegenerate} if its Malliavin matrix $\Gamma_F$ is invertible and  
\[
(\det \Gamma_F)^{-1} \in \bigcap_{p\ge1} L^p(\Omega).
\]
\end{definition}

Before concluding this section, we recall two classical results that will play a key role in the sequel. The first is a tail estimate for random variables in a fixed Wiener chaos, corresponding to Theorem~6.7 in \cite{Jan}.

\begin{lemma}\label{ll11}
Let $X=I_q(f)$ with $f\in\HHH^{\odot q}$. Then there exists a universal constant $c_q>0$ such that for all $t\ge2$,
\[
P\!\left(
\frac{|X|}{(\mathbf{E}[|X|^2])^{1/2}}
\ge t
\right)
\le e^{-c_q\, t^{2/q}}.
\]
\end{lemma}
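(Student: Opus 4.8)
The plan is to derive this tail bound from the hypercontractivity of the Ornstein--Uhlenbeck semigroup, which furnishes sharp $L^p$ estimates on a fixed Wiener chaos, followed by Markov's inequality and an optimization over the moment order. Since the quantity $|X|/(\mathbf{E}[|X|^2])^{1/2}$ is invariant under scaling $X\mapsto\lambda X$, I may and will assume throughout that $\mathbf{E}[|X|^2]=\|X\|_2^2=1$, so that the claim reduces to $P(|X|\ge t)\le e^{-c_q t^{2/q}}$ for all $t\ge2$. Here I write $\|X\|_p=(\mathbf{E}[|X|^p])^{1/p}$.

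First I would establish the key moment estimate: for every $p\ge2$,
\[
\|X\|_p \le (p-1)^{q/2}.
\]
This is Nelson's hypercontractivity theorem in its moment form. The OU semigroup $(P_\theta)_{\theta\ge0}$ acts on the $q$th chaos as multiplication by $e^{-q\theta}$, so $P_\theta X=e^{-q\theta}X$ for $X\in\mathcal{H}_q$. Nelson's inequality gives $\|P_\theta F\|_p\le\|F\|_2$ as soon as $e^{2\theta}\ge p-1$; applying it to $F=X$ with $\theta$ chosen so that $e^{2\theta}=p-1$ yields $e^{-q\theta}\|X\|_p\le\|X\|_2=1$, i.e. $\|X\|_p\le e^{q\theta}=(p-1)^{q/2}$.

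Next, for any $p\ge2$ Markov's inequality applied to $|X|^p$ gives
\[
P(|X|\ge t) \le \frac{\mathbf{E}[|X|^p]}{t^p} = \frac{\|X\|_p^p}{t^p} \le \left(\frac{(p-1)^{q/2}}{t}\right)^p \le \left(\frac{p^{q/2}}{t}\right)^p.
\]
I would then minimize the right-hand side over $p$. Taking logarithms, the exponent equals $p\left(\tfrac{q}{2}\log p-\log t\right)$, whose derivative in $p$ vanishes at $p_\ast=t^{2/q}/e$, where the exponent takes the value $-\tfrac{q}{2e}\,t^{2/q}$. Substituting $p=p_\ast$ (which is legitimate as long as $p_\ast\ge2$) produces $P(|X|\ge t)\le e^{-\frac{q}{2e}t^{2/q}}$, the desired estimate with $c_q=\tfrac{q}{2e}$.

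The one remaining point is bookkeeping rather than a genuine obstacle: the optimal order $p_\ast=t^{2/q}/e$ satisfies $p_\ast\ge2$ only for $t\ge(2e)^{q/2}$, whereas the statement requires $t\ge2$. On the complementary compact range $2\le t\le(2e)^{q/2}$, the trivial bound $P(\cdot)\le1$ combined with the fact that $t\mapsto e^{-c t^{2/q}}$ is bounded below by a positive constant depending only on $q$ lets one shrink $c_q$ so that the inequality holds uniformly for all $t\ge2$. Merging the two regimes yields a single universal constant $c_q>0$ and completes the argument. The only substantive input is the hypercontractive moment bound; everything else is elementary.
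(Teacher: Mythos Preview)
Your argument is correct: the hypercontractive moment bound $\|X\|_p\le(p-1)^{q/2}\|X\|_2$ for $X\in\mathcal H_q$ followed by Markov and optimization in $p$ is the standard route, and your bookkeeping on the small-$t$ range is fine. The paper does not actually supply a proof of this lemma but simply cites Theorem~6.7 of Janson's \emph{Gaussian Hilbert Spaces}; the proof there proceeds exactly through hypercontractivity and moment comparison, so your write-up is essentially a self-contained version of the reference.
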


For any $\beta=(\beta_1,\ldots,\beta_j)\in\{1,\ldots,m\}^j$, let  
\[
\partial_{\beta_i} = \frac{\partial}{\partial x_{\beta_i}}, 
\qquad 
\partial_{\beta} = \partial_{\beta_1}\cdots\partial_{\beta_j}.
\]

We denote by $\mathcal{S}(\mathbb{R}^m)$ the Schwartz space of smooth functions $f:\mathbb{R}^m\to\mathbb{R}$ such that for every $k,j\ge1$ and every multi-index $\beta\in\{1,\ldots,m\}^j$,
\[
\sup_{x\in\mathbb{R}^m} |x|^k\,|\partial_{\beta}f(x)| <\infty.
\]
The next theorem provides a classical criterion ensuring that a random vector has a smooth density; see \cite{N}, Proposition~2.1.5.

\begin{theorem}\label{tt1}
If $F=(F_1,\ldots,F_m)$ is nondegenerate in the sense of Definition~\ref{def1}, then the law of $F$ admits a density belonging to the Schwartz space $\mathcal{S}(\mathbb{R}^m)$.
\end{theorem}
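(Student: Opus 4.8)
The plan is to establish the result through the standard machinery of iterated Malliavin integration by parts, in which the invertibility and the negative moments of $\Gamma_F$ furnished by nondegeneracy play the decisive role. The first step is to prove the one-step integration-by-parts formula: for every $\varphi\in C_b^\infty(\mathbb{R}^m)$, every $G\in\mathbb{D}^\infty$ and every $i\in\{1,\dots,m\}$,
$$\mathbf{E}\big[(\partial_i\varphi)(F)\,G\big]=\mathbf{E}\big[\varphi(F)\,H_i(F,G)\big],\qquad H_i(F,G)=\sum_{j=1}^m\delta\!\big(G\,(\Gamma_F^{-1})_{ij}\,DF_j\big),$$
where $\delta$ denotes the divergence (the adjoint of $D$). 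This is obtained by applying the chain rule $D(\varphi(F))=\sum_j(\partial_j\varphi)(F)\,DF_j$, pairing it against $DF_i$ in $\HHH$ to solve for $(\partial_j\varphi)(F)$ through $\Gamma_F^{-1}$, and then invoking the duality $\mathbf{E}[\langle DU,u\rangle_{\HHH}]=\mathbf{E}[U\,\delta(u)]$ between $D$ and $\delta$.

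The crucial ingredient, and the step I expect to be the main obstacle, is to show that the entries $(\Gamma_F^{-1})_{ij}$ belong to $\mathbb{D}^\infty$, so that the arguments $G\,(\Gamma_F^{-1})_{ij}\,DF_j$ lie in $\mathrm{Dom}(\delta)$ and the formula above is legitimate. This is exactly where Definition~\ref{def1} enters. Writing $\Gamma_F^{-1}=(\det\Gamma_F)^{-1}\,\mathrm{adj}(\Gamma_F)$, the entries of the adjugate are polynomials in $\langle DF_i,DF_j\rangle_{\HHH}\in\mathbb{D}^\infty$, so it suffices to prove $(\det\Gamma_F)^{-1}\in\mathbb{D}^\infty$. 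I would obtain this from the relation $D\big((\det\Gamma_F)^{-1}\big)=-(\det\Gamma_F)^{-2}D(\det\Gamma_F)$, iterated to higher orders, combined with the hypothesis $(\det\Gamma_F)^{-1}\in\bigcap_p L^p(\Omega)$: each differentiation produces higher negative powers of the determinant, which are controlled in $L^p$ precisely by nondegeneracy, and Hölder's inequality then keeps every $\mathbb{D}^{k,p}$-norm finite. Once $(\Gamma_F^{-1})_{ij}\in\mathbb{D}^\infty$ is secured, the integrand of $\delta$ is a product of $\mathbb{D}^\infty$ elements, and the continuity of $\delta$ on $\mathbb{D}^\infty$ (Meyer's inequalities) shows $H_i(F,G)\in\bigcap_p L^p(\Omega)$. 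Iterating the one-step formula then yields, for every multi-index $\beta$,
$$\mathbf{E}\big[(\partial_\beta\varphi)(F)\,G\big]=\mathbf{E}\big[\varphi(F)\,H_\beta(F,G)\big],$$
where $H_\beta(F,G)$ is defined recursively and satisfies $H_\beta(F,1)\in\bigcap_p L^p(\Omega)$.

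From here both existence and smoothness of the density can be read off. Taking $G=1$ and the full index $\beta=(1,2,\dots,m)$, an approximation of the indicator $\mathbf{1}_{\{y_1>x_1,\dots,y_m>x_m\}}$ by functions in $C_b^\infty$ produces the representation
$$p(x)=\mathbf{E}\big[\mathbf{1}_{\{F_1>x_1,\dots,F_m>x_m\}}\,H_{(1,\dots,m)}(F,1)\big].$$
The bound $|\mathbf{E}[(\partial_\beta\varphi)(F)]|\le\|\varphi\|_\infty\,\|H_\beta(F,1)\|_{L^1}$, valid for every $\beta$, shows by the classical duality criterion that $p$ is the density of $F$ and that $p\in C^\infty(\mathbb{R}^m)$; each partial derivative $\partial_\gamma p$ admits an analogous expectation representation obtained by replacing $H_{(1,\dots,m)}$ with a higher-order kernel that still lies in every $L^p(\Omega)$.

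It remains to establish the rapid decay. On the event $\{F_i>x_i\}$ with $x_i>0$ one has $x_i^k\le|F_i|^k$, so from the representation above and Cauchy--Schwarz,
$$|x_i|^k\,|\partial_\gamma p(x)|\le \big\|\,|F_i|^k\,\big\|_{L^2}\,\big\|H(F,1)\big\|_{L^2},$$
which is finite because $F_i\in\mathbb{D}^\infty\subset\bigcap_p L^p(\Omega)$ has moments of all orders; using in each of the $2^m$ orthants the representation built from the corresponding one-sided indicator handles the decay as any coordinate tends to $\pm\infty$. Combining these estimates over all $k$ and all multi-indices $\gamma$ gives $\sup_x|x|^k|\partial_\gamma p(x)|<\infty$, that is $p\in\mathcal{S}(\mathbb{R}^m)$. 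The genuinely delicate point throughout is the propagation of the negative moments of $\det\Gamma_F$ through the Malliavin--Sobolev scale in the second paragraph; once $(\det\Gamma_F)^{-1}\in\mathbb{D}^\infty$ is in hand, the remaining steps are routine bookkeeping.
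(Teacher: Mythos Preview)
Your sketch is essentially correct and follows the classical route (chain rule, inversion of $\Gamma_F$ via the adjugate and negative moments of the determinant, Meyer's inequalities, iterated integration by parts, then decay from moments of $F$). Note, however, that the paper does not supply its own proof of this theorem: it is stated as a background result with the reference ``see \cite{N}, Proposition~2.1.5''. Your argument is precisely the proof given in that reference, so there is nothing to compare beyond observing that you have reconstructed the cited proof.
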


We conclude by recalling the following factorization of the determinant of the Malliavin matrix, proved in \cite{LNNT}, Lemma~4.1.

\begin{lemma}\label{ll1}
Let $Z=(Z_1,\ldots,Z_m)\in(\mathbb{D}^{1,1})^m$ and let $\Gamma_Z$ be its Malliavin matrix. Then, almost surely,
\[
\det \Gamma_Z
= \|DZ_1\|_{\HHH}^2
\prod_{j=2}^{m}
\big\| DZ_j - \mathrm{proj}_{E_{j-1}}(DZ_j)\big\|_{\HHH}^2,
\]
where $E_{j-1}$ is the closed linear span in $\HHH$ generated by $\{DZ_1,\ldots,DZ_{j-1}\}$ and $\mathrm{proj}_{E_{j-1}}$ denotes the orthogonal projection onto $E_{j-1}$.
\end{lemma}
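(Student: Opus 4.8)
The statement is a deterministic fact about Gram matrices, applied pathwise, so the plan is to reduce it to linear algebra and then run Gram--Schmidt.

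\emph{Step 1: reduction to a deterministic identity.} Since each $Z_j\in\mathbb{D}^{1,1}$, the derivative $DZ_j$ is an $\HHH$-valued random variable; hence for $P$-almost every $\omega$ all the vectors $v_j:=DZ_j(\omega)$, $j=1,\dots,m$, lie in $\HHH$, and the real numbers $\Gamma_Z(i,j)(\omega)=\langle v_i,v_j\rangle_\HHH$ are well defined. Moreover each $E_{j-1}$ is finite-dimensional, hence closed, so $E_{j-1}=\operatorname{span}\{v_1,\dots,v_{j-1}\}$ and $\operatorname{proj}_{E_{j-1}}$ is the ordinary orthogonal projection. I would therefore fix such an $\omega$ and prove the purely deterministic identity: for any $v_1,\dots,v_m$ in a real Hilbert space $\HHH$, writing $F_{j-1}:=\operatorname{span}\{v_1,\dots,v_{j-1}\}$ and $F_0:=\{0\}$,
\[
\det\big(\langle v_i,v_j\rangle_\HHH\big)_{1\le i,j\le m}=\prod_{j=1}^m\big\|v_j-\operatorname{proj}_{F_{j-1}}v_j\big\|_\HHH^2 .
\]

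\emph{Step 2: Gram--Schmidt and a triangular factorization.} Set $w_j:=v_j-\operatorname{proj}_{F_{j-1}}v_j$. By construction $w_j\perp F_{j-1}$, and an immediate induction gives $\operatorname{span}\{w_1,\dots,w_j\}=F_j$ for every $j$, so the $w_j$ are pairwise orthogonal. Since $\operatorname{proj}_{F_{j-1}}v_j\in F_{j-1}=\operatorname{span}\{w_1,\dots,w_{j-1}\}$, there are scalars $T_{jk}$ with $v_j=w_j+\sum_{k=1}^{j-1}T_{jk}w_k$; completing $T$ to an $m\times m$ matrix, it is lower triangular with unit diagonal. Using orthogonality of the $w_k$,
\[
\langle v_i,v_j\rangle_\HHH=\sum_{k=1}^m T_{ik}T_{jk}\|w_k\|_\HHH^2,
\]
i.e. $\Gamma_Z=T\,D\,T^{\top}$ with $D=\operatorname{diag}\big(\|w_1\|_\HHH^2,\dots,\|w_m\|_\HHH^2\big)$. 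Taking determinants and using $\det T=\det T^{\top}=1$ yields $\det\Gamma_Z=\prod_{j=1}^m\|w_j\|_\HHH^2$, which is the claimed formula (the case $m=1$ being the trivial equality $\det\Gamma_Z=\|DZ_1\|_\HHH^2$, with empty product).

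\emph{Step 3: degeneracy and measurability.} If $\{v_j\}$ is linearly dependent, some $w_k$ vanishes; then both sides are $0$, and even though the coefficients $T_{jk}$ need no longer be unique, the factorization $\Gamma_Z=TDT^{\top}$ holds for any choice, so the computation is unaffected. To read the identity as one between genuine random variables, I would note that $\omega\mapsto\operatorname{proj}_{E_{j-1}(\omega)}DZ_j(\omega)$ is measurable — it can be expressed through the entries of $\Gamma_Z$ via a pseudo-inverse — so the right-hand side is $\mathcal{F}$-measurable and the equality holds $P$-almost surely.

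There is no real obstacle here: this is the classical ``Gram determinant $=$ squared volume, computed by successive base$\times$height'' identity. The only points needing a little care are working pathwise inside $\HHH$ (legitimate precisely because $Z_j\in\mathbb{D}^{1,1}$ forces $DZ_j(\omega)\in\HHH$ a.s.) and handling the rank-deficient case without assuming linear independence — both dispatched cleanly by the triangular factorization of Step 2.
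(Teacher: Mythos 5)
Your argument is correct: reducing to the deterministic Gram-determinant identity and proving it via Gram--Schmidt with the unit lower-triangular factorization $\Gamma_Z=TDT^{\top}$ (including the rank-deficient case, where some $\|w_k\|_{\HHH}=0$ makes both sides vanish) is exactly the standard ``base times height'' proof, and the pathwise use of $DZ_j(\omega)\in\HHH$ under the $\mathbb{D}^{1,1}$ assumption is handled properly. The paper itself gives no proof, simply citing Lemma~4.1 of \cite{LNNT}, and your self-contained argument is the same classical route one would expect there, so nothing further is needed.
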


\subsection{The Rosenblatt process}

We now introduce the Rosenblatt process and recall its main features. Let $H\in(1/2,1)$. The Rosenblatt process $(Z_t,t\in\mathbb{R})$ with self-similarity index $H$ is defined by
\begin{equation}\label{rose}
Z_t = I_2(L_t), \qquad t\in\mathbb{R},
\end{equation}
where the kernel $L_t$ is given, for $y_1,y_2\in\mathbb{R}$, by
\[
L_t(y_1,y_2)
= d(H)\int_0^t (u-y_1)_+^{\frac{H}{2}-1} \,(u-y_2)_+^{\frac{H}{2}-1}\,du.
\]
The constant $d(H)$ in \eqref{rose} is chosen so that $\mathbf{E}[(Z_t)^2]=|t|^{2H}$ for all $t\in\mathbb{R}$. We use the convention $\theta_+^\alpha=\theta^\alpha$ for $\theta>0$, and $0$ otherwise. Integrals over negative intervals are defined by $\int_0^t = -\int_t^0$ for $t<0$.

The Rosenblatt process is $H$-self-similar, i.e., for every $a>0$,
\[
(Z_{at}, t\in\mathbb{R}) \overset{law}{=} (a^H Z_t, t\in\mathbb{R}),
\]
and has stationary increments: for every $h\in\mathbb{R}$,
\[
(Z_{t+h}-Z_t, t\in\mathbb{R}) \overset{law}{=} (Z_t, t\in\mathbb{R}).
\]
Its covariance function is given by
\[
\mathbf{E}[Z_t Z_s]
= \tfrac12\big(|t|^{2H}+|s|^{2H}-|t-s|^{2H}\big),
\]
which coincides with that of fractional Brownian motion with Hurst index $H$. Since the Rosenblatt process is non-Gaussian, this covariance does not determine its law.

The random variable $Z_1$ is called the Rosenblatt random variable. If $0=t_0<t_1<\dots<t_m$, the random vector $(Z_{t_1}-Z_{t_0},\dots,Z_{t_m}-Z_{t_{m-1}})$ is called the \emph{Rosenblatt increment vector}, whereas for $0\le s_1<\dots<s_m$, the vector $(Z_{s_1},\dots,Z_{s_m})$ is called a \emph{Rosenblatt vector}.

Since $Z_t$ belongs to the second Wiener chaos for every $t\ne0$, it admits a series expansion as a weighted sum of independent centered chi-square random variables. More precisely (see Proposition~2.7.13 in \cite{NP-book}),
\begin{equation}\label{series}
Z_t = \sum_{j\ge1} \lambda_{j,t}\,(N_j^2-1),
\end{equation}
where $(N_j)_{j\ge1}$ is a sequence of i.i.d.\ $N(0,1)$ random variables, and the series \eqref{series} converges in $L^2(\Omega)$ and almost surely. The coefficients $\lambda_{j,t}$ can be expressed in terms of contractions of the kernel $L_t$.

\section{Smoothness of the density}

This section is devoted to the proof of Theorem \ref{tt3}. We first recall some auxiliary results, then establish the existence of negative moments for the determinant of the Malliavin matrix of a Rosenblatt vector, and finally deduce the smoothness of its density.

\subsection{Auxiliary results}

The two results recalled below were obtained in \cite{LNNT}. The first one asserts that a Rosenblatt increment vector admits a density, while the second provides a useful identity for the “Malliavin conditional variance”.

\begin{theorem}
	Let $(Z_{t}, t\in \mathbb{R})$ be a Rosenblatt process with self-similarity index $H\in \left( \frac{1}{2}, 1\right)$. Let $m\geq 2$ and $0=t_{0}<t_{1}<\dots<t_{m}$. Consider the random vector 
	\begin{equation*}
		Z_{\pi} =\left( Z_{t_{1}}-Z_{t_{0}},  Z_{t_{2}}-Z_{t_{1}}, \ldots ,  Z_{t_{m}}-Z_{t_{m-1}}\right).
	\end{equation*}
Then
\begin{equation*}
	P\left( \det \Gamma _{Z_{\pi} }>0\right)= 1.
\end{equation*}
In particular, the random vector $Z_{\pi}$ admits a density with respect to the Lebesgue measure on $\mathbb{R} ^{m}$. 
\end{theorem}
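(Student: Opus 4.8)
The plan is to combine the factorization of Lemma~\ref{ll1} with a localization argument that decouples the successive increments. Set $Y_j := Z_{t_j}-Z_{t_{j-1}} = I_2(\ell_j)$, where $\ell_j := L_{t_j}-L_{t_{j-1}}$, i.e.\
\[
\ell_j(y_1,y_2) = d(H)\int_{t_{j-1}}^{t_j}(u-y_1)_+^{\frac H2-1}(u-y_2)_+^{\frac H2-1}\,du .
\]
Since $Y_j$ belongs to the second chaos, $D_rY_j = 2\,I_1(\ell_j(\cdot,r))$, seen as an element of $\HHH = L^2(\mathbb{R})$. By Lemma~\ref{ll1},
\[
\det\Gamma_{Z_\pi} = \|DY_1\|_\HHH^2\prod_{j=2}^m\big\|DY_j-\mathrm{proj}_{E_{j-1}}(DY_j)\big\|_\HHH^2 ,
\]
so it is enough to prove that each of these $m$ factors is almost surely strictly positive.

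The key observation is a support property of the kernels. If $u\in(t_{j-1},t_j)$ and $r\ge t_j$, then $(u-r)_+=0$, whence $\ell_j(\cdot,r)\equiv 0$ and $D_rY_j=0$ for $r\ge t_j$. In particular, for $k<j$ and $r\in(t_{j-1},t_j)$ one has $r>t_{j-1}\ge t_k$, so $D_rY_k=0$; that is, $DY_k$ vanishes a.e.\ on $(t_{j-1},t_j)$. Since $E_{j-1}$ is spanned by the finitely many vectors $DY_1,\dots,DY_{j-1}$, almost surely every element of $E_{j-1}$ — in particular $\mathrm{proj}_{E_{j-1}}(DY_j)$ — vanishes a.e.\ on $(t_{j-1},t_j)$. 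Hence the orthogonal projection onto $L^2((t_{j-1},t_j))\subset\HHH$ sends $DY_j-\mathrm{proj}_{E_{j-1}}(DY_j)$ to $DY_j\mathbf{1}_{(t_{j-1},t_j)}$, and therefore, almost surely,
\[
\big\|DY_j-\mathrm{proj}_{E_{j-1}}(DY_j)\big\|_\HHH^2 \;\ge\; \|DY_j\|_{L^2(t_{j-1},t_j)}^2 \;=:\; W_j, \qquad j=1,\dots,m,
\]
with the convention $E_0=\{0\}$.

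It remains to show $P(W_j=0)=0$, and this is where the second-chaos structure is used. By the product formula $I_1(g)^2=I_2(g\otimes g)+\|g\|_\HHH^2$ and Fubini,
\[
\tfrac14 W_j \;=\; \int_{t_{j-1}}^{t_j}I_1(\ell_j(\cdot,r))^2\,dr \;=\; I_2(\Phi_j)+c_j, \qquad \Phi_j(x,y):=\int_{t_{j-1}}^{t_j}\ell_j(x,r)\ell_j(y,r)\,dr,
\]
where $c_j=\int_{t_{j-1}}^{t_j}\|\ell_j(\cdot,r)\|_{L^2(\mathbb{R})}^2\,dr=\trace A_{\Phi_j}\in[0,\infty)$ and $A_{\Phi_j}$ is the nonnegative trace-class operator associated with the symmetric kernel $\Phi_j$. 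Diagonalising $A_{\Phi_j}$, with eigenvalues $\mu_i\ge0$ and orthonormal eigenfunctions $e_i$, yields the weighted chi-square representation $\tfrac14 W_j=\sum_i\mu_i(G_i^2-1)+\sum_i\mu_i=\sum_i\mu_i G_i^2$, where $G_i:=B(e_i)$ are i.i.d.\ $N(0,1)$. A direct computation with the explicit kernel gives $\ell_j(s,r)=d(H)\int_r^{t_j}(u-s)_+^{\frac H2-1}(u-r)^{\frac H2-1}\,du>0$ whenever $t_{j-1}<s<r<t_j$, so $c_j>0$; hence $A_{\Phi_j}\neq0$, at least one eigenvalue $\mu_{i_0}$ is strictly positive, and $W_j\ge 4\mu_{i_0}G_{i_0}^2$. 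Therefore $P(W_j=0)\le P(G_{i_0}=0)=0$.

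Putting the pieces together, $\det\Gamma_{Z_\pi}\ge\prod_{j=1}^m W_j>0$ almost surely, and the existence of a density follows from the Bouleau--Hirsch criterion. I expect the crux to be the localization step of the second paragraph: it is what reduces an a priori delicate statement about the random span $E_{j-1}$ to the one-dimensional positivity statements $P(W_j=0)=0$, which are then dispatched via the representation of second-chaos random variables as weighted sums of independent centered chi-squares; the accompanying verification that $\ell_j$ does not vanish on the relevant region is routine but must be carried out.
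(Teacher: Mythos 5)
Your proof is correct, but it follows a genuinely different route from the one in the paper. The paper disposes of this theorem in two lines: it invokes Theorem~1.1 of \cite{LNNT} for the existence of a density of $V=(Z_{t_1},\dots,Z_{t_m})$ (the a.s.\ positivity of the Malliavin determinant being likewise imported from \cite{LNNT}) and transfers the density to $Z_\pi$ through the unimodular linear map $S$ of \eqref{s}. You instead give a self-contained proof of $P(\det\Gamma_{Z_\pi}>0)=1$: starting from the factorization of Lemma~\ref{ll1}, you exploit the support property $D_rY_k=0$ for $r\ge t_k$ (immediate from the kernel $\ell_k$) to see that every element of $E_{j-1}$ vanishes on $(t_{j-1},t_j)$, so that the $j$th residual norm is bounded below by $W_j=\|DY_j\|^2_{L^2(t_{j-1},t_j)}$; you then rule out $W_j=0$ via the weighted chi-square representation of second-chaos quantities, after checking $\ell_j(s,r)>0$ on $\{t_{j-1}<s<r<t_j\}$. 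All steps check out (the stochastic Fubini and the trace-class computation are routine since $c_j\le\|\ell_j\|^2_{L^2(\mathbb{R}^2)}<\infty$), and your localization step is in effect the qualitative core of the paper's Lemma~\ref{ll2}, which records the same lower bound in the scaled, distributional form $(t_j-t_{j-1})^{2H}F_j$ with $F_j\ge\|DZ_1\|^2_{\HHH_1}$. What the paper's citation route buys is brevity and uniformity across all Hermite orders $q$; what your argument buys is a transparent, self-contained proof specific to $q=2$ — though note that for the purposes of the rest of the paper (Lemma~\ref{ll3} and Theorem~\ref{tt2}) the mere positivity of $W_j$ is not enough: one needs the quantitative version with the explicit $(t_j-t_{j-1})^{2H}$ scaling and a lower bound independent of $j$, which is exactly what Lemma~\ref{ll2} supplies. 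A final minor remark: for $P(W_j=0)=0$ you could bypass the spectral decomposition entirely, since by Fubini $\mathbf{E}\!\left[\int_{t_{j-1}}^{t_j}\mathbf{1}_{\{D_rY_j=0\}}\,dr\right]=\int_{t_{j-1}}^{t_j}P\!\left(I_1(\ell_j(\cdot,r))=0\right)dr=0$ once $\|\ell_j(\cdot,r)\|>0$ for a.e.\ such $r$.
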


\noindent\textbf{Proof.} By Theorem 1.1 in \cite{LNNT}, the vector
\[
V:= (Z_{t_{1}},\dots, Z_{t_{m}})
\]
admits a density $f_{V}$ on $\mathbb{R}^{m}$. Consider the map $S:\mathbb{R}^{m}\to\mathbb{R}^{m}$ defined by 
\begin{equation}\label{s}
	S(y_{1},\dots, y_{m})= \left( y_{1},\, y_{1}+ y_{2},\, \dots,\, \sum _{j=1}^{m} y_{j} \right).
\end{equation}
Then $S$ is linear, invertible, and the determinant of its associated matrix is equal to one. We have $V=S(Z_{\pi})$. Since $S$ is a linear diffeomorphism, the law of $Z_{\pi}$ is the pushforward of the law of $V$ by $S^{-1}$. It follows that $Z_{\pi}$ admits a density given by $f_{Z_{\pi}}(y)= f_{V}(S(y))$ for all $y\in \mathbb{R}^{m}$. \qed

\begin{lemma}\label{ll2}
Let $(Z_{t}, t\in \mathbb{R})$ be a Rosenblatt process with self-similarity index $H\in \left( \frac{1}{2}, 1\right)$. Let $m\geq 2$ and $0=t_{0}<t_{1}<\dots<t_{m}$. For $j\geq 1$, set  
\begin{equation}
\label{21o-1}
\tilde{E}_{j-1}= \span \{ D(Z_{t_{k}}-Z_{t_{l}}): k,l=0,1,\dots, j-1\}.
\end{equation}
Then, for every $j \geq 1$,
\begin{equation*}
\left\| D(Z_{t_{j}}-Z_{t_{j-1}})- {\rm proj}_{\tilde{E}_{j-1}} \big(D(Z_{t_{j}}-Z_{t_{j-1}})\big)\right\| _{\HHH} ^{2} \overset{\rm law}{=} (t_{j}-t_{j-1})^{2H} F_{j},
\end{equation*}
where, for each $j\geq 1$, $F_{j}$ is a random variable such that $F_{j} \geq \Vert DZ_{1}\Vert ^{2}_{\HHH_{1}}$, with $\HHH_{1}= L ^{2}([0, 1])$.
\end{lemma}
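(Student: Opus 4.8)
The plan is to exploit the self-similarity of the Rosenblatt process, transported to the level of Malliavin derivatives, together with the fact that a projection residual only decreases when one shrinks the spanning set. First I would record the key structural fact: if $Z_t = I_2(L_t)$, then $D_r Z_t = 2 I_1(L_t(\cdot, r)) = 2\int_{\mathbb{R}} L_t(y,r)\, dB_y$, so that the $\HHH$-valued random element $DZ_t$ is a linear image of the Gaussian field $B$. Consequently, for increments, $D(Z_{t_j}-Z_{t_{j-1}}) = 2 I_1\big((L_{t_j}-L_{t_{j-1}})(\cdot,\star)\big)$, and the whole collection $\{D(Z_{t_k}-Z_{t_l})\}$ together with $D(Z_{t_j}-Z_{t_{j-1}})$ is jointly a deterministic linear functional of $B$. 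The $H$-self-similarity $(Z_{at})_t \overset{law}{=} (a^H Z_t)_t$ upgrades (via the kernel scaling $L_{at}(ay_1,ay_2) = a^{H-1} a^{?}\cdots$, i.e. the scaling identity used in \cite{LNNT}) to a scaling identity for the joint law of the Malliavin derivatives: rescaling time by $a=t_j-t_{j-1}$ and using stationarity of increments one gets that the vector of derivatives of the increments over the grid $(t_k)$, read in the Hilbert space $\HHH$, has the same law (jointly) as $(t_j-t_{j-1})^H$ times the corresponding derivatives for a rescaled/shifted grid in which the $j$-th increment becomes the increment of $Z$ over $[0,1]$.

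Next I would handle the projection. Because $\tilde E_{j-1} = \mathrm{span}\{D(Z_{t_k}-Z_{t_l}) : k,l \le j-1\}$ is a subspace of $\HHH$ built from the other increment derivatives, and the norm of $D(Z_{t_j}-Z_{t_{j-1}})-\mathrm{proj}_{\tilde E_{j-1}}(\cdot)$ equals the distance from $D(Z_{t_j}-Z_{t_{j-1}})$ to that subspace, the scaling identity of the previous paragraph transfers directly: $\|D(Z_{t_j}-Z_{t_{j-1}})-\mathrm{proj}_{\tilde E_{j-1}}(D(Z_{t_j}-Z_{t_{j-1}}))\|_\HHH^2 \overset{\rm law}{=} (t_j-t_{j-1})^{2H}\, F_j$, where $F_j$ is the squared distance, for the rescaled grid, from $D(Z_1 - Z_0) = DZ_1$ to the span of the derivatives of the other (rescaled) increments. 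Since $B$ is two-sided, the relevant Hilbert space here is all of $\HHH = L^2(\mathbb{R})$, not just $L^2([0,1])$. To get the lower bound $F_j \ge \|DZ_1\|_{\HHH_1}^2$ with $\HHH_1 = L^2([0,1])$, I would observe that $DZ_1 = D(Z_1-Z_0)$ is supported (as a function of the Wiener variable $y$) on $(-\infty,1)$ — indeed $L_1(y,r)$ vanishes unless $y<1$ — while the increments $Z_{t_k}-Z_{t_l}$ corresponding to the rescaled grid involve only times $\ge 1$ after the rescaling that sends $[t_{j-1},t_j]$ to $[0,1]$; hence the part of $DZ_1$ living on $L^2([0,1])$, call it the restriction to $y\in(0,1)$, is orthogonal in $\HHH$ to every element of the spanning subspace, because those kernels $L_{t}(\cdot,r)-L_{s}(\cdot,r)$ with $s,t\ge 1$ are supported on $y<t$ but their overlap with the piece of $DZ_1$ supported on $(0,1)$... — more carefully, I would split $DZ_1 = u + v$ where $u$ is the $L^2((0,1))$-component and $v$ the rest, argue $u \perp \tilde E_{j-1}$, so the distance from $DZ_1$ to $\tilde E_{j-1}$ is at least $\|u\|_\HHH = \|DZ_1\|_{\HHH_1}$, giving $F_j \ge \|DZ_1\|_{\HHH_1}^2$.

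The main obstacle I anticipate is the orthogonality claim $u \perp \tilde E_{j-1}$: it requires a careful bookkeeping of the supports of the kernels after the self-similar rescaling and time-shift, making sure that the rescaled "past" increments (those over $[t_k,t_l]$ with $k,l\le j-1$, which sit to the left of $[0,1]$ after shifting) and "future" increments (over intervals to the right of $1$) both have Malliavin-derivative kernels whose $y$-support is disjoint from $(0,1)$, or at least have zero $L^2$-inner product against functions supported on $(0,1)$. For the future increments this is immediate since $L_t(y,r)-L_s(y,r)=d(H)\int_s^t(u-y)_+^{H/2-1}(u-r)_+^{H/2-1}du$ with $s\ge 1$ forces, for the inner product against something supported in $y\in(0,1)$, a factor $\int_s^t (u-y)_+^{H/2-1}\cdots$ that does not vanish — so the cleanest route is instead to use \emph{stationarity of increments} to reposition the distinguished increment to $[0,1]$ while the others stay on one side, combined with the observation from \cite{LNNT} that the derivative of an increment over $[a,b]$ has kernel supported in $y<b$, and to intersect supports accordingly; I would then only need that the $(0,1)$-supported component of $DZ_1$ is genuinely orthogonal to kernels supported in $y\le 0$, which is automatic. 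I would phrase the argument so that it reduces, after the law-preserving transformation, to Lemma~\ref{ll1}'s geometric picture plus this one support disjointness, and cite \cite{LNNT} for the scaling of $L_t$ and for the fact already used there that $F_j$ has the stated form.
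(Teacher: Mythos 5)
The paper itself gives no argument for Lemma \ref{ll2}: it simply points to the proof of Theorem~6.1 in \cite{LNNT}. Your reconstruction therefore supplies a proof the paper omits, and its final form --- reposition the distinguished increment to $[0,1]$ using stationarity/self-similarity at the level of the \emph{joint} law of the derivatives (legitimate because all the $D(Z_{t_k}-Z_{t_l})$ are explicit Wiener integrals against $B$, so the scaling and shift of $B$ act on them covariantly), note that the kernels spanning $\tilde{E}_{j-1}$ vanish beyond $t_{j-1}$, and conclude that the component of $D(Z_{t_j}-Z_{t_{j-1}})$ carried by $L^2([t_{j-1},t_j])$ is orthogonal to $\tilde{E}_{j-1}$, whence the projection residual dominates $\|D(Z_{t_j}-Z_{t_{j-1}})\|^2_{L^2([t_{j-1},t_j])}\overset{\rm law}{=}(t_j-t_{j-1})^{2H}\|DZ_1\|^2_{\HHH_1}$ --- is correct and is, as far as one can tell, exactly the mechanism behind the cited result.

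Two slips in your write-up should be repaired. First, you argue about supports ``as a function of the Wiener variable $y$'', but the orthogonality you need lives in $\HHH=L^2(\mathbb{R},dr)$, i.e.\ in the derivative variable $r$: one has $D_r(Z_t-Z_s)=2I_1\big((L_t-L_s)(\cdot,r)\big)$ with $(L_t-L_s)(y,r)=d(H)\int_s^t(u-y)_+^{H/2-1}(u-r)_+^{H/2-1}\,du$, and the relevant fact is that this vanishes for $r\ge t$; the symmetry of $L_t$ makes the $y$- and $r$-statements look alike, but the proof must be phrased in $r$. Second, the ``main obstacle'' you anticipate --- future increments whose kernels overlap $(0,1)$ --- does not exist: by definition \eqref{21o-1}, $\tilde{E}_{j-1}$ is spanned only by increments over the times $t_0,\dots,t_{j-1}$, all of which lie at or to the left of $t_{j-1}$ (to the left of $0$ after repositioning), so their derivative kernels are supported in $\{r<t_{j-1}\}$ and the orthogonality is automatic; relatedly, your interim claim that the rescaled spanning increments ``involve only times $\ge 1$'' is backwards. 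Once these are corrected, the argument closes as you describe, with $F_j$ taken to be the rescaled residual, which pathwise dominates $\|DZ_1\|^2_{\HHH_1}$.
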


\noindent\textbf{Proof.} This follows from the proof of Theorem 6.1 in \cite{LNNT}. \qed 

\subsection{Smoothness of the density: Proof of Theorem \ref{tt3}}

As a first step, we show that the norm of the Malliavin derivative of a Rosenblatt random variable admits negative moments of all orders.

\begin{proposition}\label{prop33}
Let $(Z_{t}, t\in \mathbb{R})$ be a Rosenblatt process. Then, for every $p\geq 1$, 
\begin{equation*}
\mathbf{E}\left[  \Vert DZ_{1}\Vert ^{-2p}_{\HHH _{1}}\right]<\infty. 
\end{equation*}
\end{proposition}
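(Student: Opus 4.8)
The plan is to use the chaos expansion \eqref{series} for the Rosenblatt random variable $Z_1$, namely $Z_1 = \sum_{j\ge 1}\lambda_j (N_j^2-1)$ with $\lambda_j = \lambda_{j,1}$ and $(N_j)$ i.i.d.\ standard Gaussian, and to compute $\|DZ_1\|_{\HHH_1}^2$ explicitly in terms of the $N_j$. Since $Z_1 = I_2(L_1)$, one has $D_r Z_1 = 2 I_1(L_1(r,\cdot))$, and $\|DZ_1\|_{\HHH_1}^2 = 4\int_{\R}\big(I_1(L_1(r,\cdot))\big)^2\,dr$. Diagonalizing the Hilbert--Schmidt operator with kernel $L_1$ on $\HHH_1=L^2([0,1])$, i.e.\ writing $L_1(y_1,y_2) = \sum_j \lambda_j e_j(y_1)e_j(y_2)$ with $(e_j)$ an orthonormal system and $N_j = I_1(e_j)/\sqrt{2}$ appropriately normalized (this is exactly the construction behind \eqref{series}), a direct computation gives
\[
\|DZ_1\|_{\HHH_1}^2 = 4\sum_{j\ge 1}\lambda_j^2 N_j^2 + (\text{cross terms that vanish by orthonormality}) = 4\sum_{j\ge1}\lambda_j^2 N_j^2 .
\]
So the first step is to justify carefully that $\|DZ_1\|_{\HHH_1}^2 = 4\sum_{j\ge1}\lambda_j^2 N_j^2$ (or an equivalent exact formula), reducing the problem to bounding negative moments of a weighted sum of independent squared Gaussians.

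The second step is to establish $\mathbf{E}\big[(\sum_j \lambda_j^2 N_j^2)^{-p}\big]<\infty$ for every $p\ge1$. Here I would not try to sum the series: it suffices to keep the first finitely many (indeed, the first $p$, or even just the first one) terms, since all $N_j^2\ge 0$, giving the pointwise lower bound $\sum_j \lambda_j^2 N_j^2 \ge \sum_{j=1}^{k}\lambda_j^2 N_j^2$ for any fixed $k$. The eigenvalues $\lambda_j$ of the Rosenblatt kernel are known to be nonzero (in fact strictly positive — the operator is positive and injective), so fix $k$ large enough that $\lambda_1,\dots,\lambda_k\neq 0$; then $(\sum_{j=1}^k \lambda_j^2 N_j^2)^{-p}$ has finite expectation because $(N_1,\dots,N_k)$ has a bounded density on $\R^k$ and, near the origin, $\int_{|y|\le 1}|y|^{-2p}\,dy<\infty$ in dimension $k$ as soon as $k>2p$. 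Choosing $k=2p+1$ (and enlarging if needed so that the first $k$ eigenvalues are nonzero) finishes this step via a routine change of variables and polar coordinates. Alternatively, and perhaps more cleanly, one can invoke a small-ball estimate: $P(\sum_{j=1}^k \lambda_j^2 N_j^2 \le \varepsilon) \le C\varepsilon^{k/2}$, and then $\mathbf{E}[(\cdots)^{-p}] = \int_0^\infty p\,\varepsilon^{-p-1}P(\sum \le \varepsilon)\,d\varepsilon < \infty$ when $k/2 > p$.

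The main obstacle, I expect, is the first step: pinning down the exact relation between $\|DZ_1\|_{\HHH_1}^2$ and the spectral data $(\lambda_j, N_j)$, and in particular making sure the representation \eqref{series} and the Malliavin derivative refer to the same diagonalization of $L_1$ (orthonormality of the eigenfunctions $e_j$ is what makes the cross terms disappear). Once the identity $\|DZ_1\|_{\HHH_1}^2 = 4\sum_j \lambda_j^2 N_j^2$ (or $\ge$ a finite such sum) is in hand, the negative-moment bound is a soft and standard argument. A minor secondary point is to confirm that $L_1$ has at least one (hence, by the structure of \eqref{series}, infinitely many) nonzero eigenvalue, which is immediate since $Z_1$ is non-degenerate Gaussian-chaos-valued — indeed $\mathbf{E}[Z_1^2]=1\ne 0$ forces $\sum_j\lambda_j^2 = \tfrac12 \mathbf{E}[Z_1^2]\ne 0$.
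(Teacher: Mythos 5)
Your overall strategy is the same as the paper's: reduce $\|DZ_1\|^2$ to a weighted sum of independent squared Gaussians, keep only $k>2p$ of the terms, and use finiteness of negative moments of a $\chi^2(k)$ (equivalently, the small-ball estimate you mention). The paper's Step 4 is exactly your second step. However, there is a genuine gap in your first step. The norm in the statement is $\|DZ_1\|_{\HHH_1}$ with $\HHH_1=L^2([0,1])$ (as fixed in Lemma~\ref{ll2}), \emph{not} $L^2(\mathbb{R})$; note that $D_rZ_1=2I_1(L_1(r,\cdot))$ does not vanish for $r<0$, so the two norms genuinely differ. Your diagonalization $L_1=\sum_j\lambda_j\,e_j\otimes e_j$ (the one underlying \eqref{series}) produces eigenfunctions $e_j$ orthonormal in $L^2(\mathbb{R})$; the cross terms in $\int (D_rZ_1)^2\,dr$ therefore vanish only when the integral is taken over all of $\mathbb{R}$, which is what you in fact write. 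Restricted to $[0,1]$ the $e_j$ are no longer orthonormal and the identity $\|DZ_1\|_{\HHH_1}^2=4\sum_j\lambda_j^2N_j^2$ fails. Nor can you recover the statement from the full-line computation: $\|DZ_1\|_{L^2(\mathbb{R})}\ge\|DZ_1\|_{L^2([0,1])}$, so the inequality goes the wrong way for negative moments, and this matters for the application (Lemma~\ref{ll2} bounds $F_j$ from below by the \emph{restricted} norm).

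The paper's fix is to diagonalize a different operator: the covariance operator $T=B^*B$ on $L^2([0,1])$ of the Gaussian process $Y_s=I_1(L_1(s,\cdot))$, $s\in[0,1]$, whose eigenfunctions $(\varphi_j)$ are orthonormal in $L^2([0,1])$; Parseval then gives $\|DZ_1\|_{L^2([0,1])}^2=4\sum_{j\in S}\lambda_j\zeta_j^2$ with $(\zeta_j)$ i.i.d.\ standard normal. A second point your sketch underestimates: for arbitrary $p$ you need at least $2p+1$ strictly positive eigenvalues of \emph{this} operator, i.e.\ $\operatorname{rank}(T)=\infty$. Your justification (``the operator is positive and injective'') is not correct as stated — the operator with kernel $L_1$ annihilates, e.g., every function supported in $[1,\infty)$ — and the observation $\sum_j\lambda_j^2=\tfrac12\mathbf{E}[Z_1^2]\neq0$ only yields one nonzero eigenvalue. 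The paper derives the infinite-rank property from Step~2 of the proof of Theorem~1.2 in \cite{Rola}; some such input is needed to close your argument.
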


\begin{proof}
Following \eqref{rose}, we have \( Z_1 = I_2(L_1) \) for a symmetric kernel \( L_1 \in L^2(\mathbb{R}^2) \).
For each \( s \in [0,1] \), define
\[
g_s(\cdot) := L_1(s,\cdot) \in L^2(\mathbb{R}), 
\qquad 
Y_s := I_1(g_s).
\]
Then \((Y_s)_{s\in[0,1]}\) is a centered Gaussian process belonging to the first Wiener chaos, with covariance function
\[
K(s,t) = \mathbb{E}[Y_s Y_t] = \langle g_s, g_t \rangle_{L^2(\mathbb{R})}, 
\qquad s,t\in[0,1].
\]

\medskip
\noindent\textit{Step 1: Spectral decomposition of the covariance operator.}
Consider the Hilbert--Schmidt operator \( T:L^2([0,1]) \to L^2([0,1]) \) defined by
\[
(T\varphi)(s) := \int_0^1 K(s,t)\,\varphi(t)\,dt.
\]
The operator \(T\) is self-adjoint, positive, and compact. Hence, there exists an orthonormal basis 
\((\varphi_j)_{j\ge1}\) of \(L^2([0,1])\) consisting of eigenfunctions of \(T\), associated with nonnegative eigenvalues 
\((\lambda_j)_{j\ge1}\) satisfying \(\lambda_j \to 0\) as \(j\to\infty\). In particular,
\begin{equation}\label{eq:Kexp}
K = \sum_{j=1}^{\infty} \lambda_j\,\varphi_j \otimes \varphi_j.
\end{equation}

It is easy to verify that \(T = B^* B\), where
\[
B : L^2([0,1]) \to L^2(\mathbb{R}), 
\qquad 
Bf = \int_0^1 L_1(t,\cdot)\,f(t)\,dt,
\]
and
\[
B^* : L^2(\mathbb{R}) \to L^2([0,1]), 
\qquad 
B^*f = \int_{\mathbb{R}} L_1(s,\cdot)\,f(s)\,ds.
\]
From Step~2 in the proof of \cite[Theorem~1.2]{Rola}, we know that 
\(\operatorname{rank}(B_{|_{L^2(\mathbb{R})}}) = \infty.\)
Since
\[
\operatorname{rank}(B_{|_{L^2(\mathbb{R})}}) 
\le \operatorname{rank}(B) 
= \operatorname{rank}(B^*B)
= \operatorname{rank}(T),
\]
we deduce that \(\operatorname{rank}(T) = \infty\). 
Therefore, the set
\[
S := \{\, j \ge 1 : \lambda_j > 0 \,\}
\]
is infinite.

\medskip
\noindent\textit{Step 2: Construction of an orthogonal Gaussian family.}
For each \( j \in S \), define
\[
h_j := \int_0^1 \varphi_j(s)\,g_s\,ds \in L^2(\mathbb{R}),
\qquad
\xi_j := I_1(h_j),
\qquad
\zeta_j := \frac{\xi_j}{\sqrt{\lambda_j}}.
\]
A straightforward computation yields, for all \( i,j \in S \),
\[
\mathbb{E}[\xi_i \xi_j]
= \langle h_i, h_j \rangle_{L^2(\mathbb{R})}
= \int_0^1\!\!\int_0^1 \varphi_i(s)\varphi_j(t)K(s,t)\,ds\,dt
= \langle \varphi_i, T\varphi_j \rangle_{L^2([0,1])}
= \lambda_j\,\delta_{ij}.
\]
Hence, \((\xi_j)_{j\in S}\) are independent centered Gaussian random variables with 
\(\mathrm{Var}(\xi_j)=\lambda_j\), and consequently, 
\((\zeta_j)_{j\in S}\) are i.i.d.\ standard normal variables.

By the Karhunen--Loève expansion associated with \(K\),
\[
Y_s = \sum_{j\in S} \sqrt{\lambda_j}\,\zeta_j\,\varphi_j(s),
\qquad s\in[0,1],
\]
with convergence in \(L^2(\Omega\times[0,1])\).

\medskip
\noindent\textit{Step 3: Expression of the Malliavin derivative.}
Since \( D_s Z_1 = 2\,I_1(f_1(s,\cdot)) = 2\,Y_s \), we obtain
\[
D_s Z_1 = 2\sum_{j\in S} \sqrt{\lambda_j}\,\zeta_j\,\varphi_j(s),
\qquad s\in[0,1].
\]
By orthonormality of \((\varphi_j)\) in \(L^2([0,1])\),
\begin{equation}\label{eq:DZ1}
\|DZ_1\|_{L^2([0,1])}^2
= \int_0^1 (D_s Z_1)^2\,ds
= 4\sum_{j\in S} \lambda_j\,\zeta_j^2.
\end{equation}

\medskip
\noindent\textit{Step 4: Finite negative moments of \(\|DZ_1\|_{L^2([0,1])}\).}
Fix \(p \ge 1\), and let \( j_1, j_2, \dots \) denote an enumeration of \(S\). 
Since \(S\) is infinite and each \(\lambda_{j_k} > 0\), for any \(N \ge 1\),
\[
\|DZ_1\|_{L^2([0,1])}^2
\ge 4\sum_{k=1}^{N} \lambda_{j_k}\,\zeta_{j_k}^2
\ge 4\,\lambda_{j_N}\sum_{k=1}^{N}\zeta_{j_k}^2.
\]
Therefore,
\[
\|DZ_1\|_{L^2([0,1])}^{-2p}
\le (4\,\lambda_{j_N})^{-p}
\Big(\sum_{k=1}^{N}\zeta_{j_k}^2\Big)^{-p}.
\]
Since \(\sum_{k=1}^{N}\zeta_{j_k}^2 \sim \chi^2(N)\), its negative moments of order \(p\) are finite whenever \(N>2p\). 
Choosing \(N := 2p+1\) ensures this condition, and thus
\[
\mathbb{E}\!\left[\|DZ_1\|_{L^2([0,1])}^{-2p}\right] < \infty,
\qquad \forall\, p \ge 1.
\]
\end{proof}

We now deduce the existence of negative moments for the determinant of the Malliavin matrix of Rosenblatt increment vectors.

\begin{lemma}\label{ll3}
Let $(Z_{t}, t\in \mathbb{R})$ be a Rosenblatt process with self-similarity index $H\in \left( \frac{1}{2}, 1\right)$. Let $m\geq 1$ and $0=t_{0}<t_{1}<\dots<t_{m}$. Consider the random vector 
\begin{equation*}
Z_{\pi} =\left( Z_{t_{1}}-Z_{t_{0}}, \ldots , Z_{t_{m}}-Z_{t_{m-1}}\right),
\end{equation*}
and let $\Gamma_{Z_{\pi}}$ be the Malliavin matrix of $Z _{\pi }$. Then, for any integer $k\geq 0$ and any $p\geq 1$, 
\begin{equation}\label{kpnorm:invdet}
	\left\| \left( \det \Gamma_{Z_{\pi}}\right) ^{-1} \right\| _{k, p} \leq C \prod_{j=1}^{m} (t_{j}- t_{j-1}) ^{-2H}.
\end{equation}
\end{lemma}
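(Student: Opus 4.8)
The plan is to combine the factorization of $\det\Gamma_{Z_\pi}$ from Lemma~\ref{ll1}, the law identity from Lemma~\ref{ll2}, and the $L^p$-bounds on negative moments of $\|DZ_1\|_{\HHH_1}$ from Proposition~\ref{prop33}, keeping track of the $(t_j-t_{j-1})$-dependence via self-similarity. First I would note that, by Lemma~\ref{ll1} applied with the ordering $Z_1=Z_{t_1}-Z_{t_0},\dots,Z_m=Z_{t_m}-Z_{t_{m-1}}$,
\[
\det\Gamma_{Z_\pi}=\prod_{j=1}^m R_j,\qquad R_j:=\big\|D(Z_{t_j}-Z_{t_{j-1}})-\mathrm{proj}_{E_{j-1}}\big(D(Z_{t_j}-Z_{t_{j-1}})\big)\big\|_{\HHH}^2,
\]
where $E_{j-1}=\span\{DZ_1,\dots,DZ_{j-1}\}$. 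Crucially $E_{j-1}\subseteq\tilde E_{j-1}$ (the span over all increments up to index $j-1$, as in \eqref{21o-1}), so projecting onto the larger space can only shrink the norm, giving $R_j\ge \|D(Z_{t_j}-Z_{t_{j-1}})-\mathrm{proj}_{\tilde E_{j-1}}(\cdot)\|_{\HHH}^2$, hence $R_j^{-1}$ is dominated by the reciprocal of the quantity controlled in Lemma~\ref{ll2}. Therefore $(\det\Gamma_{Z_\pi})^{-1}\le\prod_{j=1}^m \tilde R_j^{-1}$ where $\tilde R_j\overset{\rm law}{=}(t_j-t_{j-1})^{2H}F_j$ and $F_j\ge\|DZ_1\|_{\HHH_1}^2$.

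The second step is to handle the Sobolev $\|\cdot\|_{k,p}$-norm rather than just the $L^p$-norm. Here I would use the stability of $\mathbb{D}^{k,p}$ under smooth functions with controlled derivatives: since $x\mapsto 1/x$ is smooth on $(0,\infty)$ and each $R_j$ is a positive element of $\mathbb{D}^\infty$ (the $Z_{t_j}$ lie in the second chaos, hence in $\mathbb{D}^\infty$, and orthogonal projections onto finite-dimensional spans preserve this), the chain rule for Malliavin derivatives lets one bound $\|(\det\Gamma_{Z_\pi})^{-1}\|_{k,p}$ by a universal (in the $t_j$) polynomial expression in the quantities $\|R_j^{-1}\|_{L^r}$ and $\|R_j\|_{k,r}$ and $\|DR_j\|$-type norms, for finitely many $r$ depending only on $k,p,m$. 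Equivalently — and this is the cleanest route — one invokes the standard Malliavin-calculus lemma (see \cite{N}, around Proposition~2.1.4, or Nualart's ``negative moments'' estimates) stating that if $\det\Gamma\in\mathbb{D}^\infty$ and $(\det\Gamma)^{-1}\in\bigcap_p L^p$ then $(\det\Gamma)^{-1}\in\mathbb{D}^\infty$, with the $\|\cdot\|_{k,p}$-norms estimated by products of $L^r$-norms of $(\det\Gamma)^{-1}$ and Sobolev norms of the entries of $\Gamma$.

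The third step is to extract the scaling. By $H$-self-similarity of $Z$ and stationarity of increments, $D(Z_{t_j}-Z_{t_{j-1}})$ scales like $(t_j-t_{j-1})^{H}$ in $\HHH$, so each factor $R_j$ — and every Malliavin derivative of it — carries a clean power $(t_j-t_{j-1})^{2H}$; passing to reciprocals turns this into $(t_j-t_{j-1})^{-2H}$, and these powers simply multiply across $j=1,\dots,m$ and are unaffected by taking $k$ further Malliavin derivatives (differentiation in $\omega$ does not touch the deterministic scaling constant). The finiteness of the constant $C$ — independent of the grid — then reduces to the $p=\infty$-integrability statement $\mathbb{E}[\|DZ_1\|_{\HHH_1}^{-2r}]<\infty$ for all $r\ge1$, which is exactly Proposition~\ref{prop33}, together with the fact that $F_j\ge\|DZ_1\|_{\HHH_1}^2$ gives uniform-in-$j$ control. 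I expect the main obstacle to be the bookkeeping in Step~2: one must verify carefully that the Sobolev norms $\|R_j\|_{k,r}$ of the projected-residual squared-norms are finite and, more importantly, scale with the correct power of $t_j-t_{j-1}$ and otherwise stay bounded by grid-independent constants — the projection operator $\mathrm{proj}_{E_{j-1}}$ is a random, non-smooth-looking object, so one needs either the explicit Gram-matrix formula for the residual norm (writing $R_j$ as a ratio of Gram determinants, each a polynomial in the inner products $\langle DZ_i,DZ_k\rangle_{\HHH}$, hence manifestly in $\mathbb{D}^\infty$) or a direct appeal to the general negative-moments lemma applied once to the whole vector $Z_\pi$ rather than factor by factor. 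Using the Gram-determinant representation $\det\Gamma_{Z_\pi}$ directly (it is a polynomial in $\mathbb{D}^\infty$-elements) together with the lower bound $\det\Gamma_{Z_\pi}\ge\prod_j\tilde R_j\overset{\rm law}{\ge}\prod_j(t_j-t_{j-1})^{2H}\|DZ_1\|_{\HHH_1}^2$ sidesteps the projection entirely and is the route I would ultimately write up.
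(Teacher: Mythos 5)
Your proposal follows essentially the same route as the paper: the factorization of Lemma~\ref{ll1}, the law identity of Lemma~\ref{ll2} and the negative moments from Proposition~\ref{prop33} for the case $k=0$, and then, for $k\ge1$, the standard chain-rule/Sobolev-norm machinery applied to $\det\Gamma_{Z_{\pi}}$ viewed as a polynomial in the (nicely scaling) entries of the Malliavin matrix — this is exactly the argument the paper imports from \cite{boufoussi2021local}, with the scaling cancellation you describe. One small caveat: Lemma~\ref{ll2} only identifies the law of each residual factor \emph{marginally}, so the pathwise bound $(\det\Gamma_{Z_{\pi}})^{-1}\le\prod_j\tilde R_j^{-1}$ must first be decoupled by a generalized H\"older inequality before the marginal identities can be used — your closing ``$\overset{\rm law}{\ge}$'' applied to the product of dependent factors is not literally meaningful as written, although your earlier remark about controlling $L^r$-norms for finitely many $r$ shows you have the correct mechanism in mind.
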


\noindent\textbf{Proof.} We first consider the case $k=0$. We need to show that 
\begin{equation}\label{21o-2}
\left( \mathbf{E} \left[ \left( \det \Gamma_{Z_{\pi}}\right) ^{-p}\right]\right) ^{\frac{1}{p}} \leq C  \prod_{j=1}^{m} (t_{j}- t_{j-1}) ^{-2H}.
\end{equation}
We use Lemma \ref{ll1} to factorize $\det \Gamma_{Z_{\pi}}$. By this result, for every $p\geq 1$, 
\begin{equation*}
\left( \det \Gamma_{Z_{\pi}}\right) ^{-p}
=\Vert DZ_{t_{1}}\Vert _{\HHH} ^{-2p}
\prod_{j=2}^{m} 	\Vert D(Z_{t_{j}}-Z_{t_{j-1}})- {\rm proj}_{\tilde{E}_{j-1}} D(Z_{t_{j}}-Z_{t_{j-1}})\Vert _{\HHH} ^{-2p},
\end{equation*}
where $\tilde{E}_{j-1}$ is given by \eqref{21o-1}. Applying H\"older's inequality yields
\begin{eqnarray*}
\mathbf{E} \left[ 	\left( \det \Gamma_{Z_{\pi}}\right) ^{-p}\right]
&\leq& \left( \mathbf{E}\left[ \Vert DZ_{t_{1}}\Vert _{\HHH} ^{-2pm}\right]\right) ^{\frac{1}{m}} \\
&&\times \prod_{j=2} ^{m} \left( \mathbf{E}\left[ \Vert D(Z_{t_{j}}-Z_{t_{j-1}})- {\rm proj}_{\tilde{E}_{j-1}} D(Z_{t_{j}}-Z_{t_{j-1}})\Vert _{\HHH} ^{-2pm}\right]\right) ^{\frac{1}{m}}.
\end{eqnarray*}
By Lemma \ref{ll2}, for each $j=2,\dots,m$, 
\[
\Vert D(Z_{t_{j}}-Z_{t_{j-1}})- {\rm proj}_{\tilde{E}_{j-1}} D(Z_{t_{j}}-Z_{t_{j-1}})\Vert _{\HHH} ^{2}
\overset{ \rm law }{=}(t_{j}- t_{j-1} )^{2H}F_{j},
\]
with $F_{j}$ satisfying $F_{j}\geq \Vert DZ_{1}\Vert _{\HHH_{1}}^{2}$. Consequently, 
\begin{eqnarray*}
\mathbf{E} \left[ 	\left( \det \Gamma_{Z_{\pi}}\right) ^{-p}\right]
&\leq& \left( \mathbf{E}\left[ \Vert DZ_{t_{1}}\Vert _{\HHH} ^{-2pm}\right]\right) ^{\frac{1}{m}}
\prod_{j=2} ^{m} (t_{j}-t_{j-1}) ^{-2Hp}\left( \mathbf{E} \left[F_{j} ^{-pm}\right] \right) ^{\frac{1}{m}}\\
&\leq& \mathbf{E}\left[ \Vert DZ_{{1}}\Vert _{\HHH_{1}} ^{-2pm}\right]\prod_{j=1} ^{m} (t_{j}-t_{j-1}) ^{-2Hp}\\
&=& C \prod_{j=1} ^{m} (t_{j}-t_{j-1}) ^{-2Hp},
\end{eqnarray*}
and \eqref{21o-2} follows. 

Now let $k\geq 1$. By \eqref{24o-1}, 
\[
\left\| \det \Gamma _{Z_{\pi}}^{-1}  \right\|_{k,p}^p
= \mathbf{E} \left[ (\det \Gamma_{Z_{\pi}})^{-p}\right]
+ \sum _{j=1}^{k} \mathbf{E} \left[ \left\|D ^{j} (\det \Gamma_{Z_{\pi}})^{-1} \right\| ^{p}_{\HHH ^{\otimes j}}\right].
\]
By following exactly the same lines as in the proof of relation (4.11) in \cite{boufoussi2021local}, one can show that, for $j=1,\dots,k$, 
\[
\mathbf{E} \left[ \left\|D^j \big(\det \Gamma_{Z_{\pi}}\big)^{-1} \right\| ^{p}_{\HHH ^{\otimes j}}\right]
\leq  C  \prod_{i=1}^{m}  (t_{i}-t_{i-1})^{-2Hp},
\]
which yields the desired bound. \qed 

\medskip

\begin{remark}
Lemma~7.1 in \cite{Huetal} guarantees the existence of negative moments for the Malliavin derivative of general second-chaos random variables, but it does not yield quantitative, process-dependent estimates. In contrast, Lemma~\ref{ll3} (building on Proposition~\ref{prop33}) provides explicit bounds on all Sobolev norms of $(\det \Gamma_{Z_\pi})^{-1}$ in terms of the increments $(t_j - t_{j-1})$. This refined control is crucial for deriving the exponential-type estimates in Theorem~\ref{tt2}.
\end{remark}

\medskip

\noindent\textbf{Proof of Theorem \ref{tt3}.} The components of $Z_{\pi}$ clearly belong to $\mathbb{D} ^{\infty}$, since they are elements of the second Wiener chaos. Moreover, by Lemma \ref{ll3}, the Malliavin matrix $\Gamma_{Z_{\pi}}$ is invertible and $\left( \det \Gamma_{Z_{\pi}}\right) ^{-1} \in \bigcap_{p\geq 1}L ^{p} (\Omega)$. Thus, the random vector $Z_{\pi}$ is nondegenerate in the sense of Definition \ref{def1}. The conclusion then follows from Theorem \ref{tt1}. \qed

\medskip

We now deduce the smoothness of the density for Rosenblatt vectors.

\begin{corollary}\label{cor1}
Let $(Z_{t}, t\in \mathbb{R})$ be a Rosenblatt process with $H\in \left( \frac{1}{2}, 1\right)$. Let $m\geq 1$ and $0<s_{1}<\dots<s_{m}$. Then the vector $(Z_{s_{1}},\dots, Z_{s_{m}})$ admits a density which belongs to the Schwartz space $\mathcal{S}(\mathbb{R}^{m})$. 
\end{corollary}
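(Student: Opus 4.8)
The plan is to deduce Corollary \ref{cor1} from Theorem \ref{tt3} by the same linear change of variables already used to prove the existence of the density in the earlier theorem, now upgraded to the level of Schwartz-class smoothness. First I would note that the vector $(Z_{s_1},\dots,Z_{s_m})$ is an invertible linear image of the increment vector. Precisely, with $t_j:=s_j$ for $j=1,\dots,m$ and $t_0:=0$, set $Z_\pi=(Z_{t_1}-Z_{t_0},\dots,Z_{t_m}-Z_{t_{m-1}})$ as in \eqref{zpi}; then $(Z_{s_1},\dots,Z_{s_m})=S(Z_\pi)$, where $S$ is the linear map of \eqref{s}, which is invertible with $|\det S|=1$. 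By Theorem \ref{tt3}, $Z_\pi$ has a density $p_\pi\in\mathcal{S}(\mathbb{R}^m)$, so the density of $S(Z_\pi)$ is $y\mapsto p_\pi(S^{-1}y)$ (the Jacobian factor being $1$).

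The only remaining point is that the Schwartz class is stable under composition with an invertible linear map: if $p_\pi\in\mathcal{S}(\mathbb{R}^m)$ and $A\in GL_m(\mathbb{R})$, then $p_\pi\circ A\in\mathcal{S}(\mathbb{R}^m)$. This is standard — smoothness is preserved, the chain rule shows each partial derivative of $p_\pi\circ A$ is a finite linear combination of (partial derivatives of $p_\pi$) composed with $A$, and the rapid decay is preserved because $|A y|$ and $|y|$ are comparable (there are constants $c_1,c_2>0$ with $c_1|y|\le|Ay|\le c_2|y|$), so $|y|^k|\partial_\beta(p_\pi\circ A)(y)|$ is bounded by a constant times a finite sum of $\sup_x|x|^k|\partial_\gamma p_\pi(x)|$. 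Applying this with $A=S^{-1}$ gives that the density of $(Z_{s_1},\dots,Z_{s_m})$ lies in $\mathcal{S}(\mathbb{R}^m)$.

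I do not anticipate a genuine obstacle here: the corollary is a soft consequence of Theorem \ref{tt3} plus the elementary invariance of $\mathcal{S}(\mathbb{R}^m)$ under $GL_m(\mathbb{R})$. The only mild care needed is the observation that one may take $s_1>0$ (as in the statement) without loss: the map $S$ is well defined on all of $\mathbb{R}^m$, and since $0=t_0<t_1<\dots<t_m$ is exactly the configuration $0<s_1<\dots<s_m$, Theorem \ref{tt3} applies verbatim. Alternatively, one could argue directly that $(Z_{s_1},\dots,Z_{s_m})$ is itself nondegenerate — its components lie in the second Wiener chaos hence in $\mathbb{D}^\infty$, and $\Gamma_{(Z_{s_1},\dots,Z_{s_m})}=S\,\Gamma_{Z_\pi}S^\top$ gives $(\det\Gamma_{(Z_{s_1},\dots,Z_{s_m})})^{-1}=(\det\Gamma_{Z_\pi})^{-1}\in\bigcap_{p\ge1}L^p(\Omega)$ by Lemma \ref{ll3} — and then invoke Theorem \ref{tt1} again; but the change-of-variables argument is shorter and I would present that one.
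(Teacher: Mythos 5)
Your proposal is correct and follows essentially the same route as the paper: write $(Z_{s_1},\dots,Z_{s_m})=S(Z_\pi)$ with the unit-determinant linear map $S$ from \eqref{s}, so its density is $p_\pi\circ S^{-1}$, and invoke the stability of $\mathcal{S}(\mathbb{R}^m)$ under invertible linear maps. The extra details you supply (chain rule, norm comparability) and the alternative nondegeneracy argument are both fine but not needed beyond what the paper records.
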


\noindent\textbf{Proof.} Fix $0<t_{1}<\dots<t_{m}$ and set
\[
Z=(Z_{t_{1}}, \dots, Z_{t_{m}}).
\]
Then $Z= S(Z_{\pi})$, where $S$ is the map defined in \eqref{s}. Moreover, we have $f_{Z}(y)=f_{Z_{\pi}}(S^{-1}(y))$ for every $y\in \mathbb{R}^{m}$, where $f_{Z}$ and $f_{Z_{\pi}}$ denote the densities of $Z$ and $Z_{\pi}$, respectively.

It is a standard fact that if $f\in \mathcal{S}(\mathbb{R}^{m})$ and $A$ is an invertible linear map on $\mathbb{R}^{m}$, then $f\circ A\in \mathcal{S}(\mathbb{R}^{m})$. \qed

\section{Exponential-type upper bounds for the partial derivatives of the density}
%\section{Regularity of the partial derivatives}

This section is devoted to the proof of Theorem \ref{tt2}. The argument relies on two preliminary results. The first provides an estimate for the second moment of a random variable appearing in the representation of the partial derivatives of the density of a Rosenblatt increment vector. The second establishes an upper bound for the tail probability of a Rosenblatt increment vector.

\begin{proposition}\label{pp2}
Let $(Z_{t}, t\in \mathbb{R})$ be a Rosenblatt process with self-similarity index $H\in \left( \frac{1}{2}, 1\right)$. Let $n, m\geq 1$ be integers and fix $0=t_{0}<t_{1}<\dots<t_{m}$ and $\beta = \left( \beta_{1},\dots, \beta_{n} \right) \in \{1,\dots, m\}^{n}$. Let $Z_{\pi}$ be given by \eqref{zpi}. Then there exists a random variable $H ^{\beta} \in \mathbb{D} ^{\infty}$ such that, for every $\varphi \in  C_{p} ^{\infty}(\mathbb{R} ^{m})$,
\begin{equation}\label{22o-1}
\mathbf{E} \left[ \partial _{\beta } \varphi (Z_{\pi})\right] = \mathbf{E}\left[ \varphi (Z_{\pi})H ^{\beta}\right]. 
\end{equation}
Moreover,
\begin{equation}\label{22o-2}
\mathbf{E}\left[ (H ^{\beta}) ^{2} \right]\leq C \prod_{j=1}^{n}( t_{\beta_{j}}-t_{\beta _{j-1}})^{-2H}.
\end{equation}
\end{proposition}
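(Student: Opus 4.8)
The plan is to establish \eqref{22o-1} via the standard iterated Malliavin integration by parts formula and then bound the resulting weight $H^\beta$ using the estimates already available in the paper. For a single derivative $\partial_{\beta_1}$, the duality between $D$ and the divergence operator $\delta$ gives
\[
\mathbf{E}\!\left[\partial_{\beta_1}\varphi(Z_\pi)\right]
= \mathbf{E}\!\left[\varphi(Z_\pi)\,\delta\!\Big(\big(\Gamma_{Z_\pi}^{-1}DZ_\pi\big)_{\beta_1}\Big)\right],
\]
where $\big(\Gamma_{Z_\pi}^{-1}DZ_\pi\big)_{\beta_1}=\sum_{i=1}^m (\Gamma_{Z_\pi}^{-1})_{\beta_1 i}\,DZ_{\pi,i}$; iterating $n$ times produces $H^\beta$ as a nested sequence of such divergences. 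The key structural input is that $Z_{\pi,i}=Z_{t_i}-Z_{t_{i-1}}$ lies in the second Wiener chaos, hence in $\mathbb{D}^\infty$, and by Theorem~\ref{tt3} (via Lemma~\ref{ll3}) $Z_\pi$ is nondegenerate, so $(\det\Gamma_{Z_\pi})^{-1}\in\bigcap_p L^p(\Omega)$; Cramer's rule then writes each entry of $\Gamma_{Z_\pi}^{-1}$ as a cofactor divided by $\det\Gamma_{Z_\pi}$, and Lemma~\ref{ll3} controls all Sobolev norms $\|(\det\Gamma_{Z_\pi})^{-1}\|_{k,p}$. This guarantees $H^\beta\in\mathbb{D}^\infty$.

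First I would record the explicit recursive formula for $H^\beta$: setting $H^\emptyset=1$ and, for $\beta=(\beta_1,\dots,\beta_n)$,
\[
H^\beta = \delta\!\left(H^{(\beta_2,\dots,\beta_n)}\,\sum_{i=1}^m \big(\Gamma_{Z_\pi}^{-1}\big)_{\beta_1 i}\,DZ_{\pi,i}\right),
\]
and justify the integration by parts at each stage using $\varphi\in C_p^\infty$, the chain rule for $D$, and the fact that $\Gamma_{Z_\pi}^{-1}DZ_\pi\,\cdot\,H^{(\beta_2,\dots,\beta_n)}$ belongs to the domain of $\delta$ (it lies in $\mathbb{D}^{1,2}(\HHH)$ because each factor is in $\mathbb{D}^\infty$ and these are closed under multiplication). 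This is the routine part and I would treat it quickly, citing Proposition~2.1.4 in \cite{N} for the iterated formula.

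The substantive part is the scaling estimate \eqref{22o-2}. Here I would exploit the self-similarity and stationary-increment structure of $Z$: writing $\Delta_i=t_i-t_{i-1}$, one has the distributional identity $Z_\pi\overset{\rm law}{=}(\Delta_1^H \tilde Z_1,\dots,\Delta_m^H \tilde Z_m)$ only coordinatewise, so the cleaner route is to track homogeneity degrees directly. Each $DZ_{\pi,i}$ contributes a factor scaling like $\Delta_i^H$ in $\HHH$-norm; by Lemma~\ref{ll2} the "conditional variances" that build $\det\Gamma_{Z_\pi}$ scale like $\prod\Delta_j^{2H}$, so $(\Gamma_{Z_\pi}^{-1})_{\beta_1 i}$ carries the homogeneity of $\Delta_{\beta_1}^{-H}\Delta_i^{-H}$ times a dimensionless random variable with moments of all orders (uniformly in the grid, by Lemma~\ref{ll3}); the divergence $\delta$ does not change homogeneity when one also uses $D(\det\Gamma_{Z_\pi})^{-1}$, whose $\HHH^{\otimes j}$-norms are controlled with the same power $\prod\Delta_j^{-2H}$ by Lemma~\ref{ll3}. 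Iterating, $H^\beta$ acquires exactly the factor $\prod_{j=1}^n \Delta_{\beta_j}^{-H}\cdot(\text{bounded})$, but since the divergence of a product also contracts one $DZ_{\pi,\beta_j}$ against the rest, the net power per step is $\Delta_{\beta_j}^{-2H}$ — wait, more carefully: $\delta(u)$ for $u$ scaling like $\Delta^{-H}$ scales like $\Delta^{-H}\cdot\Delta^{-H}=\Delta^{-2H}$ because $\delta$ involves both $u$ itself (weighted by $Z_\pi\sim\Delta^H$, killing one power) and $Du$. The bookkeeping must be done so the final exponent matches $-2H$ per index $j$, i.e. $\prod_{j=1}^n \Delta_{\beta_j}^{-2H}$ as claimed; I would verify this by an induction on $n$, using Hölder's inequality to split the $L^2$ norm of the product into $L^p$ norms of each factor and invoking the uniform-in-grid bounds from Lemma~\ref{ll3} and Proposition~\ref{prop33} for the dimensionless pieces.

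The main obstacle is keeping the homogeneity bookkeeping rigorous while the divergence operator mixes terms of different types (derivatives of $\varphi$, cofactors, Malliavin derivatives of $\det\Gamma_{Z_\pi}^{-1}$). The clean way around it is to first prove a scaling lemma: under $t_i\mapsto a\,t_i$ and $Z\mapsto$ its $H$-self-similar rescaling, $H^\beta$ transforms by the factor $a^{-2H n}$ times an object with the same law, reducing \eqref{22o-2} on a general grid to the single estimate $\mathbf{E}[(H^\beta)^2]<\infty$ on the fixed grid $\{0,1,\dots,m\}$ — but self-similarity alone does not handle non-equispaced grids, so one instead needs the finer grid-uniform control, which is precisely why Lemma~\ref{ll3} was stated with the explicit product $\prod(t_j-t_{j-1})^{-2H}$ rather than a mere finiteness statement. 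I would therefore organize the proof as: (i) recursive formula for $H^\beta$ and membership in $\mathbb{D}^\infty$; (ii) Hölder decomposition of $\mathbf{E}[(H^\beta)^2]$ into products of Sobolev norms of $(\det\Gamma_{Z_\pi})^{-1}$, of cofactors of $\Gamma_{Z_\pi}$, and of $DZ_{\pi,i}$; (iii) estimate each factor — the first by Lemma~\ref{ll3}, the second by expanding the cofactor as a polynomial in the entries $\langle DZ_{\pi,k},DZ_{\pi,l}\rangle_\HHH$ each of which has $L^p$-norm $\lesssim \Delta_k^H\Delta_l^H$ by Cauchy–Schwarz and $\mathbf{E}[Z_t^2]=t^{2H}$, and the third trivially by self-similarity; (iv) collect the powers of $\Delta_j$ and check they sum to the stated exponents.
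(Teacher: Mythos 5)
Your proposal is correct and follows essentially the same route as the paper: existence of $H^\beta$ via the iterated integration-by-parts formula (Proposition~2.1.4 in \cite{N}), and the bound \eqref{22o-2} by splitting $\mathbf{E}[(H^\beta)^2]$ with H\"older's inequality into Sobolev norms of $(\det\Gamma_{Z_\pi})^{-1}$ (controlled by Lemma~\ref{ll3}) and of the $D(Z_{t_i}-Z_{t_{i-1}})$ (controlled by the second-chaos identity $\|D(Z_t-Z_s)\|_{k,p,\HHH}\le C(t-s)^H$), then collecting the powers of the increments. The only difference is cosmetic: the paper imports this splitting ready-made as Lemma~3.6 of \cite{boufoussi2021local}, whereas you propose to re-derive it from the recursive definition of $H^\beta$; your homogeneity heuristic wavers mid-argument, but your final target exponent and the induction-plus-H\"older plan you settle on are the right ones.
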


\noindent\textbf{Proof.}
The existence of the random variable $H^{\beta}$ satisfying \eqref{22o-1} follows from Proposition 2.1.4 in \cite{N}. From Lemma 3.6 in \cite{boufoussi2021local}, we have the estimate
\begin{eqnarray}\label{eq:partial990088999} 
\left( \mathbf{E} \left[ (H ^{\beta}) ^{2}\right]\right) ^{\frac{1}{2}}
&\leq& C\left\|\left(\operatorname{det} \Gamma_{Z_{\pi}}\right)^{-1}\right\|_{n, 2^{n+2}}^n \times \prod_{j=1}^n\left\|D(Z_{t_{\beta_j}}-Z_{t_{\beta_{j-1}}})\right\|_{n, 2^{2(m+n)}, \HHH}\nonumber \\
&&\times  \prod_{i=1 \atop i\neq \beta_j}^m\left\|D(Z_{t_{i}}-Z_{t_{{i-1}}})\right\|_{n, 2^{2(m+n)}, \HHH}^2.
\end{eqnarray}
Since for each $t>0$, $Z_{t}$ belongs to the second Wiener chaos, we have, for $j=1,2$, 
\begin{equation*}
\mathbf{E} \left[ \Vert D ^{j} (Z_{t}- Z_{s} )\Vert ^{2}_{ \HHH ^{\otimes j}}\right]
= 2\,\mathbf{E}\left[  \vert Z_{t}- Z_{s} \vert ^{2}\right]
=  2\vert t-s\vert ^{2H} \mathbf{E} [Z_{1}^{2}],
\end{equation*}
and
\begin{equation*}
D^{j} (Z_{t}- Z_{s})=0 \qquad \text{for } j\geq 3. 
\end{equation*}
This implies that, for every $k,p\geq 1$ and every $0\leq s\leq t$,
\begin{equation}\label{25o-1}
\Vert D(Z_{t}- Z_{s})\Vert _{k, p, \HHH}\leq C (t-s) ^{H}.
\end{equation}
By plugging \eqref{25o-1} and \eqref{kpnorm:invdet} into \eqref{eq:partial990088999}, we obtain
\begin{eqnarray}
\left( \mathbf{E} \left[ (H ^{\beta}) ^{2}\right]\right) ^{\frac{1}{2}}
&\leq& C \prod_{k=1}^m (t_k-t_{k-1})^{-2nH} \prod_{j=1}^n \left( (t_{\beta_{j}}-t_{\beta_{j-1}})^{H}  \times  \prod_{i=1 \atop i \neq \beta_j}^m (t_{i}-t_{i-1})^{2H} \right)\nonumber\\
&=& C \prod_{k=1}^m (t_k-t_{k-1})^{-2nH} \prod_{j=1}^n (t_{\beta_{j}}-t_{\beta_{j-1}})^{-H}  \nonumber\\
&&\times\prod_{\theta=1}^n\left(  (t_{\beta_{\theta}}-t_{\beta_{\theta-1}})^{2H} \prod_{i=1 \atop i \neq \beta_\theta}^m (t_{i}-t_{i-1})^{2H}\right). \label{22o-3}
\end{eqnarray}
On the other hand, it is easy to see that 
\begin{eqnarray}\label{eq:partial99008899900088}
\prod_{\theta=1}^n \left(  (t_{\beta_{\theta}}-t_{\beta_{\theta-1}})^{2H} \prod_{i=1 \atop i \neq \beta_\theta}^m (t_{i}-t_{i-1})^{2H}\right)
= \prod_{k=1}^m (t_k-t_{k-1})^{2nH}.
\end{eqnarray}
Substituting \eqref{eq:partial99008899900088} into \eqref{22o-3}, we conclude that 
\begin{equation}\label{eq:HBeta01}
\left( \mathbf{E} \left[ (H ^{\beta}) ^{2}\right]\right) ^{\frac{1}{2}}\leq C \prod_{j=1}^{n} (t_{\beta_j}-t_{\beta_{j}-1}) ^{-H},
\end{equation}
which is the desired estimate. \qed

\medskip

The next result concerns the tail probability of a Rosenblatt increment vector.

\begin{proposition}
Let $(Z_{t}, t\in \mathbb{R})$ be a Rosenblatt process with self-similarity index $H\in \left( \frac{1}{2}, 1\right)$. Let $m\geq 1$ and $0=t_{0}<t_{1}<\dots<t_{m}$, and let $Z_{\pi}$ be given by \eqref{zpi}. Then, for every $x_{j} \in \left[ 2(t_{j}- t_{j-1})^{H}, \infty\right)$,
\begin{equation*}
P \left( \vert Z_{t_{j}}-Z_{t_{j-1}}\vert \geq x_{j}, j=1,\dots,m\right) \leq \prod_{j=1}^{m} \exp\!\left(- \frac{c x_{j}}{(t_{j}-t_{j-1})^{H}}\right),
\end{equation*}
where $c>0$ is a constant depending only on $q$ and $m$.
\end{proposition}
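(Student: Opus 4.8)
The plan is to combine the self-similarity of the Rosenblatt process with the chaos tail estimate of Lemma~\ref{ll11}, and then deal with the dependence between the increments via a crude union/independence-type bound. First I would fix $j$ and use $H$-self-similarity together with stationarity of the increments to write $Z_{t_j}-Z_{t_{j-1}} \overset{\mathrm{law}}{=} (t_j-t_{j-1})^{H} Z_1$, so that each marginal event $\{|Z_{t_j}-Z_{t_{j-1}}| \ge x_j\}$ is, in distribution, the event $\{|Z_1| \ge x_j/(t_j-t_{j-1})^{H}\}$. Since $\mathbf{E}[Z_1^2]=1$, Lemma~\ref{ll11} applied to $X=Z_1 = I_2(L_1)$ (so $q=2$, giving exponent $2/q = 1$) yields, for $t\ge 2$,
\[
P\!\left(|Z_1|\ge t\right) \le e^{-c_2 t},
\]
which is exactly the single-factor bound $\exp(-c x_j/(t_j-t_{j-1})^{H})$ once $x_j \ge 2(t_j-t_{j-1})^{H}$ so that the threshold $t = x_j/(t_j-t_{j-1})^{H} \ge 2$ lies in the valid range of the lemma.

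The remaining issue is that the $m$ increments of $Z_\pi$ are \emph{not} independent, so the joint tail probability does not factor automatically. The cleanest route is to bound the joint event by the worst single marginal: since $\{|Z_{t_i}-Z_{t_{i-1}}|\ge x_i, \ i=1,\dots,m\} \subseteq \{|Z_{t_j}-Z_{t_{j-1}}|\ge x_j\}$ for each fixed $j$, we get, for every $j$,
\[
P\!\left(|Z_{t_i}-Z_{t_{i-1}}|\ge x_i,\ i=1,\dots,m\right) \le e^{-c_2 x_j/(t_j-t_{j-1})^{H}}.
\]
To recover a genuine product over all $j$, I would instead split the increments into a bounded number of groups that \emph{are} mutually independent. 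Concretely, because $Z_t=I_2(L_t)$ and $L_t$ is supported (in each variable) on $(-\infty,t)$, the increment $Z_{t_j}-Z_{t_{j-1}}=I_2(L_{t_j}-L_{t_{j-1}})$ has a kernel $L_{t_j}-L_{t_{j-1}}$ whose relevant mass sits on $[t_{j-1},t_j)^2$; the "diagonal blocks" $(L_{t_j}-L_{t_{j-1}})$ restricted to the square $[t_{j-1},t_j)^2$ are supported on disjoint squares, hence orthogonal, and the corresponding second-chaos variables are independent. The off-diagonal contributions (kernel mass on rectangles $[t_{k-1},t_k)\times[t_{j-1},t_j)$ with $k<j$) couple the increments, but one can absorb them: either (i) observe that the $m$ increments, viewed as elements of the second chaos over disjoint time intervals plus lower-"triangular" corrections, can still be handled by a Gaussian-type conditioning argument, or (ii) more simply, bound the joint probability by the geometric mean of the $m$ marginal bounds. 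Option (ii) works because for any events $A_1,\dots,A_m$ one has $P(\cap A_i) \le \big(\prod_{i} P(A_i)\big)^{1/m} \cdot$ (nothing) — which is false in general, so this must be replaced by the correct statement: $P(\cap_i A_i) = \prod_{i} P(A_i \mid A_1\cap\cdots\cap A_{i-1})$, and each conditional probability is again controlled by a chaos tail estimate \emph{conditionally}, using that conditioning a second-chaos vector on a sub-$\sigma$-algebra generated by finitely many of its coordinates still leaves a second-chaos-type object with comparable variance.

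The honest and shortest argument, and the one I would actually write, avoids the conditioning subtleties entirely: decompose $Z_\pi$ using the orthogonal decomposition underlying Lemma~\ref{ll1} / Lemma~\ref{ll2}, or more directly note that there is a fixed integer $r=r(m)$ (in fact $r$ can be taken to be $2$ by separating odd- and even-indexed intervals, or even $m$ trivially) and a partition of $\{1,\dots,m\}$ into $r$ groups $G_1,\dots,G_r$ such that within each $G_\ell$ the increments are jointly independent — this holds because increments over time intervals that are separated can be made independent, and in the worst case one simply takes $r=m$ singletons. Then
\[
P\!\left(\bigcap_{i=1}^{m}\{|Z_{t_i}-Z_{t_{i-1}}|\ge x_i\}\right)
\le \min_{\ell=1,\dots,r}\ \prod_{i\in G_\ell} e^{-c_2 x_i/(t_i-t_{i-1})^{H}}
\le \prod_{i=1}^{m} e^{-(c_2/r)\, x_i/(t_i-t_{i-1})^{H}},
\]
where the last inequality follows because the minimum over the $r$ groups is at most the geometric mean over the groups, and multiplying the $r$ group-products together reconstitutes the full product over $i=1,\dots,m$ with each exponent divided by $r$. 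Setting $c := c_2/r$, a constant depending only on $q=2$ and $m$, gives exactly the claimed bound. The main obstacle is purely bookkeeping: justifying cleanly that one can extract such independent groups (equivalently, that second-chaos variables built from kernels with disjoint supports are independent), which is standard for multiple Wiener--Itô integrals — orthogonal kernels in $L^2$ give uncorrelated, and in the case of disjoint supports jointly Gaussian-driven, chaoses that are in fact independent. I would state this as a one-line lemma or cite it, then let the computation above close the proof. \qed
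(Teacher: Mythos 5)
Your final displayed chain of inequalities is correct, and in the only case where it is actually justified --- the degenerate partition into $r=m$ singleton groups, where no independence is needed --- it reduces exactly to the paper's proof. The paper simply observes that $\bigcap_{i}A_i\subseteq A_j$ for every $j$, hence
$$P\Big(\bigcap_{i=1}^m A_i\Big)\leq \min_j P(A_j)\leq \Big(\prod_{j=1}^m P(A_j)\Big)^{1/m},$$
applies Lemma~\ref{ll11} with $q=2$ to each marginal (after using stationarity of increments and self-similarity to reduce to $Z_1$, exactly as you do), and absorbs the resulting factor $1/m$ in the exponent into the constant $c$. So your marginal estimate and your closing computation are both right.

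However, two things in your write-up are wrong and should be flagged. First, you assert that the inequality $P(\bigcap_i A_i)\leq\big(\prod_i P(A_i)\big)^{1/m}$ is ``false in general.'' It is true in general: since $P(\bigcap_i A_i)\leq P(A_j)$ for each $j$, raising to the $m$th power and multiplying gives $P(\bigcap_i A_i)^m\leq\prod_j P(A_j)$. This elementary fact is the entire content of the dependence-handling step (the paper calls it H\"older's inequality), and dismissing it is what sends you on the long detour through conditioning and independence. Second, the independence structure you invoke does not exist for the Rosenblatt process: the kernel of the increment $Z_{t_j}-Z_{t_{j-1}}$ is $d(H)\int_{t_{j-1}}^{t_j}(u-y_1)_+^{H/2-1}(u-y_2)_+^{H/2-1}\,du$, whose support in each variable is all of $(-\infty,t_j)$, not $[t_{j-1},t_j)$; the kernels of distinct increments overlap heavily, the increments are not independent (this is precisely the long-memory property), and no nontrivial partition into independent groups is available. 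Fortunately neither error affects your conclusion, because the singleton partition $r=m$ requires no independence whatsoever and already yields the claimed bound with $c=c_2/m$.
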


\noindent\textbf{Proof.}
By H\"older's inequality,
\begin{equation*}
P \left( \vert Z_{t_{j}}-Z_{t_{j-1}}\vert \geq x_{j}, j=1,\dots,m\right)
\leq \prod_{j=1}^{m}  P \left( \vert Z_{t_{j}}-Z_{t_{j-1}}\vert \geq x_{j} \right) ^{\frac{1}{m}}.
\end{equation*}
Next, we apply Lemma \ref{ll11} with $q=2$. For each $j=1,\dots,m$ and $x_{j}\geq 2(t_{j}-t_{j-1}) ^{H}$,
\begin{eqnarray*}
P \left( \vert Z_{t_{j}}-Z_{t_{j-1}}\vert \geq x_{j} \right)
&=& P \left( \frac {\vert Z_{t_{j}}-Z_{t_{j-1}}\vert }{(t_{j}-t_{j-1}) ^{H}}\geq \frac{x_{j}}{(t_{j}- t_{j-1}) ^{H}}\right) \\
&\leq & \exp\!\left(-c\frac{x_{j}}{(t_{j}- t_{j-1}) ^{H}}\right).
\end{eqnarray*}
Consequently,
\begin{eqnarray*}
P \left( \vert Z_{t_{j}}-Z_{t_{j-1}}\vert \geq x_{j}, j=1,\dots,m\right)
&\leq& \prod_{j=1}^{m} \left(  \exp\!\left(-c\frac{x_{j}}{(t_{j}- t_{j-1}) ^{H}}\right)\right) ^{\frac{1}{m}}\\
&=& \prod_{j=1}^{m} \exp\!\left(-\frac{c}{m}\frac{x_{j}}{(t_{j}- t_{j-1}) ^{H}}\right). 
\end{eqnarray*}
Absorbing the factor $1/m$ into the constant $c$ yields the claimed bound. \qed

\medskip

\noindent\textbf{Proof of Theorem \ref{tt2}.}
By Theorem \ref{tt3}, the random vector $Z_{\pi}$ admits a density $p_{\pi}$ in $C^{\infty}(\mathbb{R}^{m})$. By Proposition 2.1.5 in \cite{N} (see the first equation in its proof), we have, for every $x\in \mathbb{R}^{m}$,
\begin{equation*}
\partial_{x}^n p_{\pi }(x) =\partial _{\alpha} p_{\pi }(x) = (-1) ^{\vert \alpha \vert }\,\mathbf{E} \left[ \mathbbm{1}_{\{Z_{\pi} \geq x\}} H^{{\beta}}\right],
\end{equation*}
where
\begin{equation*}
\alpha=\big(\underbrace{1,\dots, 1}_{n_1\text{-times}},\ldots,\underbrace{m,\dots, m}_{n_m\text{-times}}\big)\qquad \text{and}\qquad
\beta=\big(\alpha,1,\ldots,m\big),
\end{equation*}
and $H ^{\beta}$ is a random variable in $\mathbb{D}^{\infty}$ satisfying the estimate \eqref{22o-2}. Thus, if $x=(x_{1},\dots, x_{m})$ with $x_{j}\geq 2(t_{j}- t_{j-1}) ^{H}$ for $j=1,\dots,m$, then by Cauchy--Schwarz,
\begin{eqnarray*}
\left| \partial _{x} ^{n} p_{\pi }(x)\right|
&\leq& P ( Z_{\pi}\geq x) ^{\frac{1}{2}} \left( \mathbf{E} [ (H ^{\beta}) ^{2}]\right) ^{\frac{1}{2}}\\
&\leq& C \prod_{j=1}^{m} \left(  \exp\!\left(-c\frac{x_{j}}{(t_{j}- t_{j-1}) ^{H}}\right)\right)\times \prod_{\theta=1}^{m+\sum_{i=1}^mn_i} (t_{\beta _{\theta}}-t_{\beta _{\theta}-1} )^{-H}\\
&=&  C \prod_{j=1}^{m} (t_{j}- t_{j-1}) ^{-H(1+ n_{j})} \exp\!\left(-c\frac{x_{j}}{(t_{j}- t_{j-1}) ^{H}}\right).
\end{eqnarray*}
This is precisely the desired estimate. \qed

%%%%%%%%%%%%%%%%%%%%%%%%%%%%%%%%%%%%%%%%%%%%%%%%%%%%%%%%%%%%%%%%%%%
%%                                                               %%
%% Supplementary Material, if any, should be provided in         %%
%% {supplement} environment  with title and short description.   %%
%%                                                               %%
%%%%%%%%%%%%%%%%%%%%%%%%%%%%%%%%%%%%%%%%%%%%%%%%%%%%%%%%%%%%%%%%%%%

%%%%%%%%%%%%%%%%%%%%%%%%%%%%%%%%%%%%%%%%%%%%%%%%%%%%%%%%%%%%%%%%%%%
%%                                                               %%
%% Use the two commands below for producing your bibliography    %%
%% with bibtex, then comment again the commands and include the  %%
%% content of the .bbl file in this file below the commands.     %%
%%                                                               %%
%%%%%%%%%%%%%%%%%%%%%%%%%%%%%%%%%%%%%%%%%%%%%%%%%%%%%%%%%%%%%%%%%%%

%\bibliographystyle{amsplain}
%\bibliography{yourbibfilename}

% add below the content of your .bbl file produced by bibtex.

\section*{Acknowledgements}

I.N. gratefully acknowledges support from the Luxembourg National Research Fund (Grant No. O22/17372844/FraMStA).  
C. T.  acknowledges support from   the  ANR project SDAIM 22-CE40-0015,  MATHAMSUD grant 24-MATH-04 SDE-EXPLORE and from the Ministry of Research, Innovation and Digitalization (Romania), grant CF-194-PNRR-III-C9-2023.
Part of this work was carried out while Y. N. was at the University of Luxembourg under a grant funded by the European Union's Horizon 2020 research and innovation programme (Grant No. 811017). 
%Y. N. also acknowledges  support from the ANR project SDAIM 22-CE40-0015.

\end{document}